\newtheorem{theorem}{Theorem}
\newtheorem{lemma}{Lemma}
\newtheorem{corollary}{Corollary}
\newtheorem{proposition}{Proposition}
\newtheorem{remark}{Remark}
\newtheorem{example}{Example}
\def\min{\operatorname{Minimize}}
\newcommand{\lng}{\langle}
\newcommand{\rng}{\rangle}
\newcommand{\R}{\mathbb R}
\newcommand{\T} { \rm T}
\newcommand{\ds}{\displaystyle}
\begin{document}

\title{A semi-smooth Newton method for projection equations and linear complementarity problems with respect to the second order cone\thanks{This work was supported by CNPq (Grants 303492/2013-9, 474160/2013-0, 305158/2014-7) and FAPEG.}}

\author{
J.Y. Bello Cruz\thanks{Department of Mathematical Sciences, Northern Illinois University, WH 366, DeKalb, IL - 60115, USA (E-mail: {\tt yunierbello@niu.edu}).}
\and
 O. P. Ferreira\thanks{IME/UFG, Avenida Esperan\c{c}a, s/n, Campus Samambaia,  Goi\^ania, GO - 74690-900, Brazil (E-mails: {\tt orizon@ufg.br}, {\tt lfprudente@ufg.br}).}
\and
S. Z. N\'emeth \thanks{School of Mathematics, The University of Birmingham, The Watson Building, Edgbaston, Birmingham - B15 2TT, United Kingdom
(E-mail: {\tt nemeths@for.mat.bham.ac.uk}).} 
\and
L. F. Prudente \footnotemark[3]
}

\maketitle

\begin{abstract}
\noindent In this paper a special semi-smooth equation  associated to the second order cone is studied. It is shown that,  under mild
assumptions,  the  semi-smooth Newton method applied to this equation  is  well-defined and  the generated   sequence  is  globally and Q-linearly
convergent  to a solution.  As an application,  the obtained results  are used  to  study  the  linear second order cone complementarity
problem, with special emphasis on the particular case of positive definite matrices. Moreover, some computational experiments designed to
investigate the practical viability of the method are presented. 

\medskip

\noindent
{\bf Keywords:} Semi-smooth system, conic programming,  second order  cone,   semi-smooth Newton method.

\medskip
\noindent
 {\bf  2010 AMS Subject Classification:} 90C33, 15A48.

\end{abstract}

\section{Introduction}
 In this paper we consider  the following   special semi-smooth equation in $x\in\R^n$ associated to  the closed and convex cone ${\cal{K}}\subseteq\R^n$:
\begin{equation}\label{eq:pwls}
		{\rm P}_{\cal{K}}({ {\rm x}}) +{\T} { {\rm x}}=b, 
\end{equation} 
where   \( b \in \mathbb{R}^n\)  is a constant vector, \(\T\)  is an  \(n\times n \) constant nonsingular  real matrix and  \({\rm P}_{\cal{K}}({\rm x})\) 
denotes the Euclidean metric projection of a vector \({\rm x}\) onto the cone ${\cal{K}}$.  The equation \eqref{eq:pwls} associated to the
positive orthant, ${\cal{K}}=\mathbb{R}^n_{++}$, was first studied    in  \cite{BrugnanoCasulli2007}.  Additional  papers  dealing with  \eqref{eq:pwls}  and its variations had appeared, for instance, in    \cite{BFN2015, BBCFN2016, BFP2016,   BrugnanoCasulli2009,  CasulliZanolli2012, ChenRavi2010, FerreiraNemeth2014,  GriewankBerntRadonsStreubel2015,  HuHuangQiong11, Mangasarian2009, MangasarianMeyer2006, SunLeiLiu2015}.  

The  purpose  of  the present   paper  is   to discuss  the   semi-smooth Newton method to solve equation \eqref{eq:pwls} associated to the  {\it second order cone}
\begin{equation} \label{eq:lc}
{\cal{K}}:=\left\{{\rm x}:=(x_1, {{\rm x}_2})  \in \mathbb{R}\times {\mathbb{R}}^{n-1} ~:~ \|{{\rm x}_2 }\|\leq x_1    \right\}.
\end{equation}
It is shown that,  under mild assumptions,  the  semi-smooth Newton method applied to this equation  
is  well-defined and  the generated   sequence  is  globally and Q-linearly convergent  to a solution. 
 As an application, we use the obtained results  to  study  the {\it linear second order cone complementarity problem (LSOCCP)} : Find ${\rm x}\in \mathbb{R}^n $ such that
\begin{equation} \label{eq:lclcp}
{\rm x}\in {\cal{K}}, \qquad  {\rm M}{\rm x}+{\rm q}\in  {\cal{K}},  \qquad \left\langle {\rm M}{\rm x}+{\rm q}, \, {\rm x}\right\rangle =0, 
\end{equation}
where \( {\rm q} \in \mathbb{R}^n\) is a constant vector, \( {\rm M}\)  is an  \(n\times n \) constant  nonsingular  real matrix. Complementarity
problems related to the second order cone are considered in \cite{GajardoSeeger14, KongXiuHan08,  MalikMohan05}.   This
topic of high interest is connected to several problems and has a wide range of applications, see \cite{LoboVandenbergheBoyd98}. Since this latter
survey of applications many other important connections with physics, mechanics, economics, game theory, robotics, optimization and neural
networks have been found, such as the ones in \cite{ZhangZhangPan2013, YonekuraKanno2012, LuoAnXia2009, NishimuraHayashFukushima2009, AndreaniFriedlanderMelloSantos2008, KoChenYang2011, ChenTseng2005}.
If \( {\rm M}\)   is     symmetric, then the LSOCCP \eqref{eq:lclcp} is the optimality condition of the {\it  quadratic programming problem  under a second order cone 
constraint}, 
\begin{align}    \label{eq:napsc2}
 & \min   ~ \frac{1}{2}{\rm x}^\top {\rm M}{\rm x}+{\rm q}^\top {\rm x} + c \\  
  & \qquad  \qquad   \quad  {\rm x} \in {\cal{K}} \notag
\end{align}
where    \(c \) is a real number. Although not considered in this paper, it can be shown that any second order (in particular quadratic) conic optimization problem can 
be reformulated in terms of complementarity problems (in particular linear) related to the second order cone, see \cite{NemethZhang}.

We show that the semi-smooth Newton method  for solving problems \eqref{eq:pwls}, \eqref{eq:lclcp} and \eqref{eq:napsc2} has interesting
features,  for instance,  the global and  linear convergence of the generated sequence. Moreover, we  present  some computational experiments
designed to investigate its practical viability. For a given class of problem, our numerical results suggest that the number of required
iterations is almost unchanged.  The numerical results also indicate a remarkable  robustness   with respect  to the starting point.  

The organization of the paper is as follows.  In Section~\ref{sec:int.1}  some notations and auxiliary results  used in the paper, are presented. In particular,   important and useful properties of the projection mapping onto  the second order cone are studied.  In Section~\ref{sec: defscls}, we study the convergence properties of  the semi-smooth Newton method   for solving \eqref{eq:pwls}.  In Section~\ref{sec: defbp},  the results of Section~\ref{sec: defscls} are applied  to find a solution of \eqref{eq:lclcp}. In Section~\ref{sec:ctest}, we present  some computational experiments. Final remarks are considered in Section~\ref{sec:conclusions}.

\subsection{Notations and preliminaries} \label{sec:int.1}
In this section we present  the notations and  some auxiliary results used throughout the paper.
Let $\R^n$ be   the $n$-dimensional Euclidean space with  the canonical inner  product
$\lng\cdot,\cdot\rng$  and induced norm  $\|\cdot\|$.     If \(\alpha\in\R\), then denote \(\alpha^+:=\max\{\alpha,0\}\) and  \(\alpha^-:=\max\{-\alpha,0\}\). The set of all $m \times n$ matrices with real entries is denoted by $\R^{m \times n}$ and    $\R^n\equiv \R^{n \times 1}$.  The matrix ${\rm Id_n}$ denotes the $n\times n$ identity matrix.   Denote $\|{\rm E}\|:=\max \{\|{\rm E}{\rm x}\|~:~  {\rm x}\in \R^{n}, ~\|{\rm x}\|=1\}$ for any  \({\rm E} \in \R^{n\times n}\).  

\noindent The next useful result, known as Banach's Lemma, which was proved in  2.1.1,  page 32  of \cite{Ortega1990}.
\begin{lemma}[Banach's Lemma]\label{lem:ban}
Let $\emph{E} \in \R^{n\times n}$. If  $\|\emph{E}\|<1$,  then $\emph{E}-{\rm Id_n}$
is invertible and  $ \|(\emph{E}-{\rm Id_n})^{-1}\|\leq 1/\left(1-\|\emph{E}\|\right). $
\end{lemma}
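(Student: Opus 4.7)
The plan is to prove the classical Neumann-series formula and read the norm bound off it. Since $E-\mathrm{Id}_n = -(\mathrm{Id}_n - E)$ and $(E-\mathrm{Id}_n)^{-1}$ exists iff $(\mathrm{Id}_n-E)^{-1}$ exists with the same operator norm, it suffices to establish that $\mathrm{Id}_n - E$ is invertible and bound $\|(\mathrm{Id}_n-E)^{-1}\|$.

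First I would use submultiplicativity of the operator norm to get $\|E^k\| \leq \|E\|^k$ for every $k\in\N$. Because $\|E\|<1$, the scalar series $\sum_{k=0}^{\infty}\|E\|^k$ converges to $1/(1-\|E\|)$, so the partial sums $S_N := \sum_{k=0}^{N} E^k$ form a Cauchy sequence in $\R^{n\times n}$ (a complete space) and hence converge to some $S\in\R^{n\times n}$. Passing to the limit in the telescoping identity
\begin{equation*}
(\mathrm{Id}_n - E)\,S_N \;=\; S_N\,(\mathrm{Id}_n - E) \;=\; \mathrm{Id}_n - E^{N+1},
\end{equation*}
and using $\|E^{N+1}\|\leq \|E\|^{N+1}\to 0$, I obtain $(\mathrm{Id}_n-E)S = S(\mathrm{Id}_n-E) = \mathrm{Id}_n$. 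Thus $\mathrm{Id}_n-E$, and therefore $E-\mathrm{Id}_n$, is invertible with $(\mathrm{Id}_n-E)^{-1}=S$.

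Finally, applying the triangle inequality and submultiplicativity to the series gives
\begin{equation*}
\|(\mathrm{Id}_n-E)^{-1}\| \;=\; \Big\|\sum_{k=0}^{\infty} E^k\Big\| \;\leq\; \sum_{k=0}^{\infty} \|E\|^k \;=\; \frac{1}{1-\|E\|},
\end{equation*}
which, since $\|(E-\mathrm{Id}_n)^{-1}\|=\|(\mathrm{Id}_n-E)^{-1}\|$, is exactly the stated bound.

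There is no real obstacle here; the only delicate point is the appeal to completeness of $\R^{n\times n}$ to justify the existence of $S$, but this is standard. An equivalent, series-free alternative would be to note that $(E-\mathrm{Id}_n)x=0$ forces $\|x\|=\|Ex\|\leq\|E\|\|x\|$, hence $x=0$, yielding invertibility; then for arbitrary $y$ one writes $x=(E-\mathrm{Id}_n)^{-1}y$ and derives $\|y\|\geq (1-\|E\|)\|x\|$ by the reverse triangle inequality. Either route gives the lemma in a few lines.
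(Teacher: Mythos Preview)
Your proof is correct: the Neumann-series argument is the standard way to establish Banach's Lemma, and both the invertibility and the norm bound follow exactly as you describe. The paper, however, does not supply its own proof of this lemma at all --- it simply cites the result from Ortega's textbook (2.1.1, page 32 of \cite{Ortega1990}) --- so there is nothing to compare against beyond noting that your argument is the classical one that reference would contain.
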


We continue this section to present an interesting result on the eigenvalues of the sum of two symmetric matrices and its consequence. 

\begin{lemma} \label{l:wilkinson}
Let $A$ and $B$ be two $n \times n$ symmetric matrices. Denote the eigenvalues of $A$, $B$ and $A+B$ by $\lambda_i(A)$, $\lambda_i(B)$ and $\lambda_i( A+B)$  respectively,  where $i=1, \ldots, n$ and  all three sets are arranged in 
non-increasing order. Then $\lambda_i(A)+ \lambda_n(B) \leq \lambda_i(A+B)\leq \lambda_i (A)+ \lambda_1(B)$ for $i = 1, \ldots, n$.
\end{lemma}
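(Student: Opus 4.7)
The plan is to derive both inequalities from the Courant--Fischer min--max characterization of eigenvalues of a symmetric matrix, namely
\[
\lambda_i(A)=\max_{\substack{V\subseteq\R^n\\ \dim V=i}}\ \min_{\substack{x\in V\\ \|x\|=1}}\ \langle Ax,x\rangle,
\]
and the analogous identities for $B$ and $A+B$. This characterization is exactly tailored to additive perturbations, since the Rayleigh quotient is linear in the matrix argument, and it is what will let me split the infimum/supremum over $(A+B)$ into separate estimates for $A$ and $B$.

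First I would establish the upper bound $\lambda_i(A+B)\le\lambda_i(A)+\lambda_1(B)$. For any unit vector $x\in\R^n$, the extremal definition of $\lambda_1(B)$ gives $\langle Bx,x\rangle\le\lambda_1(B)$, so that
\[
\langle (A+B)x,x\rangle\ \le\ \langle Ax,x\rangle+\lambda_1(B).
\]
Fixing an $i$-dimensional subspace $V$ and taking the minimum over unit $x\in V$ on both sides preserves the inequality; then taking the maximum over $V$ with $\dim V=i$ and applying Courant--Fischer to $A+B$ on the left and to $A$ on the right yields exactly $\lambda_i(A+B)\le\lambda_i(A)+\lambda_1(B)$.

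For the lower bound $\lambda_i(A)+\lambda_n(B)\le\lambda_i(A+B)$, I would apply the upper bound I just proved to the pair of symmetric matrices $A+B$ and $-B$, whose sum is $A$. This gives
\[
\lambda_i(A)\ =\ \lambda_i\bigl((A+B)+(-B)\bigr)\ \le\ \lambda_i(A+B)+\lambda_1(-B),
\]
and since $\lambda_1(-B)=-\lambda_n(B)$ (eigenvalues of $-B$ are the negatives of those of $B$, with the ordering reversed), rearranging produces the desired inequality.

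There is no real obstacle here; the only subtlety is bookkeeping the ordering convention (non-increasing) consistently so that $\lambda_1$ is the largest and $\lambda_n$ the smallest eigenvalue, which makes $\lambda_1(-B)=-\lambda_n(B)$ and the bounds $\langle Bx,x\rangle\in[\lambda_n(B),\lambda_1(B)]$ valid. With that convention fixed, the argument is two applications of Courant--Fischer and a sign flip.
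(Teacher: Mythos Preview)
Your argument is correct: the Courant--Fischer min--max principle gives the upper bound directly, and the lower bound follows by applying the same estimate to the pair $(A+B,-B)$ together with $\lambda_1(-B)=-\lambda_n(B)$. The only step worth double-checking is that the inequality $\min_{x\in V}\langle(A+B)x,x\rangle\le\min_{x\in V}\langle Ax,x\rangle+\lambda_1(B)$ is preserved under the outer $\max$ over $V$, and it is, since the constant $\lambda_1(B)$ does not depend on $V$.

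By way of comparison, the paper does not prove this lemma at all: it simply cites page~101 of Wilkinson's \emph{The Algebraic Eigenvalue Problem}. What you have written is exactly the standard proof of Weyl's inequality, and is in fact the argument Wilkinson gives there, so your proposal supplies the details that the paper delegates to the reference.
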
 
\begin{proof}
See page 101 of \cite{Wilkinson65}.
\end{proof}
\begin{corollary} \label{c:wilkinson}
Let $A_1, \ldots , A_p$  be  $n \times n$ symmetric matrices and $A = a _1A_1+ \cdots  + a_p A_p $ such that  $a _1+ \ldots  + a_p =1$ with $a_j \in [0,  1]$.  Denote the eigenvalues of $A_j$  and $A$ by $\lambda_{i}(A_j)$ and $\lambda_{i}(A)$  respectively,  where    $j=1, \ldots, p$, $i=1, \ldots, n$ and  all the sets are arranged in 
non-increasing order. Then $\emph{min} \{ \lambda_{n}(A_1),  \ldots,  \lambda_{n}(A_p) \} \leq \lambda_{i}(A) \leq   \max\{ \lambda_{1}(A_1),  \ldots, \lambda_{1}(A_p)\}$,  for $i = 1, \ldots, n$.
\end{corollary}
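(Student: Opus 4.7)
The plan is to reduce the problem to bounding only the extremal eigenvalues of $A$. Indeed, for every symmetric $n \times n$ matrix $X$, the ordering convention gives $\lambda_n(X) \leq \lambda_i(X) \leq \lambda_1(X)$ for all $i$; hence, writing $m := \min\{\lambda_n(A_1),\ldots,\lambda_n(A_p)\}$ and $M := \max\{\lambda_1(A_1),\ldots,\lambda_1(A_p)\}$, it suffices to show that $\lambda_1(A) \leq M$ and $\lambda_n(A) \geq m$.

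For the upper bound, I would first note that Lemma~\ref{l:wilkinson} applied with $i=1$ gives the sub-additivity of the largest eigenvalue, $\lambda_1(X+Y) \leq \lambda_1(X) + \lambda_1(Y)$ for any two symmetric matrices $X,Y$. Iterating this inequality $p-1$ times yields
$$\lambda_1(A) = \lambda_1\!\left(\sum_{j=1}^p a_j A_j\right) \leq \sum_{j=1}^p \lambda_1(a_j A_j).$$
Since each $a_j \in [0,1]$ is nonnegative, scaling gives $\lambda_1(a_j A_j) = a_j \lambda_1(A_j) \leq a_j M$. Summing and using $a_1+\cdots+a_p = 1$ then produces $\lambda_1(A) \leq M$.

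For the lower bound, I would apply Lemma~\ref{l:wilkinson} with $i=n$, which yields the dual super-additivity $\lambda_n(X+Y) \geq \lambda_n(X) + \lambda_n(Y)$. Iterating, using $\lambda_n(a_j A_j) = a_j \lambda_n(A_j) \geq a_j m$, and again invoking $\sum_j a_j = 1$, I would obtain $\lambda_n(A) \geq m$. Combining the two inequalities with the observation at the beginning closes the argument.

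There is no real obstacle here: the proof is essentially a routine application of Weyl's inequalities (encoded in Lemma~\ref{l:wilkinson}) together with the nonnegativity of the coefficients $a_j$, which is what lets me pull the scalars out of $\lambda_1$ and $\lambda_n$ without flipping signs. The only point that needs a small amount of care is the scalar factorization $\lambda_i(a_j A_j) = a_j \lambda_i(A_j)$, which relies on $a_j \geq 0$; if some $a_j$ were negative, the roles of $\lambda_1$ and $\lambda_n$ would swap for that term and the bounds would no longer telescope cleanly.
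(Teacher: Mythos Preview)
Your proof is correct and follows essentially the same route as the paper: both arguments iterate Lemma~\ref{l:wilkinson} across the summands $a_jA_j$, use $a_j\ge 0$ to pull the scalars out of the eigenvalue map, and then invoke $\sum_j a_j=1$ to collapse the bound to $M$ (respectively $m$). The paper's proof is a one-line sketch of exactly this, so your version simply unpacks the details it leaves implicit.
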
 
\begin{proof}
Since  $a _1+ \ldots  + a_p =1$ with $a _j\in [0,  1]$ and the    eigenvalues  of     $a_j A_j$ are equals to   $a_j  \lambda_{i}(A_j)$ for   $j=1, \ldots, p$ and  $i=1, \ldots, n$. Thus,  the proof is an immediate consequence of  Lemma~\ref{l:wilkinson}.
\end{proof}
For a differentiable mapping $F: \R^{n} \to \R^{m}$ at a point ${\rm x} \in \R^{n}$, we denote by  $F'({\rm x}) \in \R^{m \times n}$ the Jacobian of $F$ at this point. If $F$ is a  locally Lipschitz continuous mapping, then the set 
$$
\partial_{B}F({\rm x}):=\left\{{\rm V}({\rm x})\in \R^{m \times n} ~:~  \exists ~ \{{\rm x}^k\}\subset D_F; ~ {\rm x}^k \to {\rm x}, ~F'({\rm x}^k) \to {\rm V({\rm x})}\right\},
$$
is nonempty and called the {\it B-subdifferential} of $F$ at ${\rm x}$, where $D_F\subseteq \R^{n} $ denotes the
set of points at which $F$ is differentiable. The convex hull of $
\partial_{B}F$, $\partial F({\rm x}):=\mathrm{conv}\, \partial_{B}F({\rm x})$, is known as
the {\it Clarke's generalized  Jacobian},  see   Definition~ 2.6.1 on page 70 of  \cite{Clarke1990}.   The next result is the generalization of the vector {\it Mean-Value Theorem}  for Lipschitz continuous mapping, see Proposition 2.6.5  on page 72  of    \cite{Clarke1990}. 
\begin{proposition} \label{pr:mvt}
Let $F: \R^{n} \to \R^{m}$ be a Lipschitz continuous mapping.  Then we have 
$$
F({\rm y})-F({\rm x}) \in \mathrm{conv}\, \partial F([{\rm x},  {\rm y}] )( {\rm y}- {\rm x}), \qquad  {\rm x},  {\rm y} \in   \R^{n},
$$
where the right hand side in the inclusion  denotes the convex hull  of all points of the form $U({\rm z}) ( {\rm y}- {\rm x})$  with $U({\rm z})\in \partial F({\rm z})$ and  ${\rm z}\in [ {\rm x}, {\rm y}]:=\{t {\rm x} + (1-t) {\rm y}~:~ t \in [0,  1]\}$. 
\end{proposition}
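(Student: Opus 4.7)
The plan is to reduce the vector statement to the well-known scalar Lebourg mean-value theorem for locally Lipschitz functions, and then lift the scalar conclusion back to $\R^{m}$ via a Hahn--Banach separation argument. This is the standard route taken by Clarke, and the main subtlety lies in ensuring that the target set $S:=\mathrm{conv}\,\partial F([{\rm x},{\rm y}])({\rm y}-{\rm x})$ is closed and convex so that separation is applicable, and in applying a chain rule to a composition with a linear functional.

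First I would fix ${\rm x},{\rm y}\in\R^{n}$ and define
\[
S\;:=\;\mathrm{conv}\,\bigl\{U({\rm z})({\rm y}-{\rm x})~:~{\rm z}\in[{\rm x},{\rm y}],\;U({\rm z})\in\partial F({\rm z})\bigr\}\;\subseteq\;\R^{m}.
\]
The key preliminary observation is that the Clarke generalized Jacobian $\partial F(\cdot)$ is upper semi-continuous with nonempty, convex, compact values on compact sets, and the segment $[{\rm x},{\rm y}]$ is compact. From this (together with the local boundedness of $\partial F$ guaranteed by the Lipschitz constant of $F$) one shows that $S$ is a nonempty, compact, convex subset of $\R^{m}$. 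Having that, I would argue by contradiction: suppose $F({\rm y})-F({\rm x})\notin S$. Then Hahn--Banach provides a vector $\zeta\in\R^{m}$ and a scalar $\alpha$ with
\[
\langle \zeta,\,F({\rm y})-F({\rm x})\rangle \;>\;\alpha\;\geq\;\sup_{v\in S}\langle \zeta,v\rangle.
\]

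Next I would introduce the scalar auxiliary function $g:[0,1]\to\R$ defined by $g(t):=\langle \zeta,F({\rm x}+t({\rm y}-{\rm x}))\rangle$. Since $F$ is Lipschitz and the inner product is linear, $g$ is a locally Lipschitz function on an interval, so Lebourg's scalar mean-value theorem applies: there exists $\theta\in(0,1)$ and $\xi\in\partial g(\theta)$ with
\[
g(1)-g(0)\;=\;\xi.
\]
By the Clarke chain rule for the composition of a Lipschitz mapping with a linear one, $\partial g(\theta)\subseteq \langle \zeta,\partial F({\rm z})({\rm y}-{\rm x})\rangle$ where ${\rm z}:={\rm x}+\theta({\rm y}-{\rm x})\in[{\rm x},{\rm y}]$. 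Hence there exists $U({\rm z})\in\partial F({\rm z})$ such that $\xi=\langle \zeta,\,U({\rm z})({\rm y}-{\rm x})\rangle$, and in particular $U({\rm z})({\rm y}-{\rm x})\in S$. Consequently
\[
\langle \zeta,\,F({\rm y})-F({\rm x})\rangle \;=\;g(1)-g(0)\;=\;\langle \zeta,\,U({\rm z})({\rm y}-{\rm x})\rangle\;\leq\;\sup_{v\in S}\langle \zeta,v\rangle,
\]
contradicting the strict inequality obtained from separation. Therefore $F({\rm y})-F({\rm x})\in S$, which is exactly the claim.

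The main obstacle I anticipate is not the separation step itself but rather the two underlying technical facts that make it work: (i) verifying that $S$ is closed (so that Hahn--Banach yields a strict separation from $F({\rm y})-F({\rm x})$), which hinges on the compactness of $[{\rm x},{\rm y}]$ together with upper semi-continuity and compact-valuedness of $\partial F$; and (ii) justifying the chain rule inclusion $\partial g(\theta)\subseteq \zeta^{\T}\partial F({\rm z})({\rm y}-{\rm x})$, which should be derived from the definition of $\partial_{B}F$ through limits of Jacobians and passage to convex hulls. Once these two ingredients are in place, the rest of the argument is the clean separation-plus-Lebourg reduction sketched above.
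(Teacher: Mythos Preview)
The paper does not supply its own proof of this proposition; it simply cites Proposition~2.6.5 on page~72 of Clarke's \emph{Optimization and Nonsmooth Analysis}. Your argument---reduce to the scalar Lebourg mean-value theorem via a linear functional, then use Hahn--Banach separation from the compact convex set $S$ to derive a contradiction---is precisely the route taken in that reference, so your proposal matches the intended source.
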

\noindent We end this section with the well-known contraction mapping principle, see 8.2.2,  page 153  of \cite{Ortega1990}.
\begin{theorem}  [Contraction mapping principle] \label{fixedpoint}
 Let  $\Phi : \R^{n} \to   \R^{n}$.  Suppose that there exists $\kappa \in  [0,1)$ such that  $\|\Phi({\rm y})-\Phi({\rm x})\| \leq \kappa \|{\rm y}-{\rm x}\|$, for all $ {\rm x}, {\rm y} \in  \R^{n}$. Then there exists a unique $\bar {\rm x}\in  \R^{n}$  such that $\Phi(\bar {\rm x}) = \bar {\rm x}$.
\end{theorem}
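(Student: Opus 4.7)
The plan is to proceed along the classical Picard iteration route. I would fix an arbitrary starting point ${\rm x}^0 \in \R^n$ and define the sequence $\{{\rm x}^k\}$ recursively by ${\rm x}^{k+1} := \Phi({\rm x}^k)$ for $k \geq 0$. The goal is to show that this sequence is Cauchy, hence convergent in the complete metric space $\R^n$, that its limit is a fixed point, and finally that the fixed point is unique.

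The key step is the geometric-type bound on consecutive differences. By iterating the contraction inequality, I would show $\|{\rm x}^{k+1} - {\rm x}^{k}\| \leq \kappa^k \|{\rm x}^{1} - {\rm x}^{0}\|$. Then for $m > k$, a telescoping sum together with the triangle inequality and the geometric series $\sum_{j \geq k} \kappa^j = \kappa^k/(1-\kappa)$ (valid because $\kappa \in [0,1)$) yields
$$
\|{\rm x}^{m} - {\rm x}^{k}\| \leq \frac{\kappa^k}{1-\kappa}\,\|{\rm x}^{1} - {\rm x}^{0}\|,
$$
which tends to zero as $k \to \infty$. Therefore $\{{\rm x}^k\}$ is Cauchy and converges to some $\bar {\rm x} \in \R^n$.

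To see that $\bar {\rm x}$ is a fixed point, I would use the fact that $\Phi$ is (Lipschitz, hence) continuous: passing to the limit in ${\rm x}^{k+1} = \Phi({\rm x}^k)$ gives $\bar {\rm x} = \Phi(\bar {\rm x})$. For uniqueness, suppose $\bar {\rm x}$ and $\tilde {\rm x}$ are both fixed points; then the contraction property gives $\|\bar {\rm x} - \tilde {\rm x}\| = \|\Phi(\bar {\rm x}) - \Phi(\tilde {\rm x})\| \leq \kappa \|\bar {\rm x} - \tilde {\rm x}\|$, and since $\kappa < 1$ this forces $\|\bar {\rm x} - \tilde {\rm x}\| = 0$.

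There is no real obstacle here; the only mildly delicate point is making sure one uses $\kappa < 1$ strictly in two places (the summability of the geometric series for the Cauchy argument, and the uniqueness conclusion). Since the theorem is standard textbook material and cited directly to \cite{Ortega1990}, I would expect the authors to simply quote it rather than reprove it.
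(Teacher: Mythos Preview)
Your proof is correct and is exactly the standard Picard-iteration argument one finds in any textbook treatment. As you yourself anticipated, the paper does not supply a proof at all: the theorem is stated as a preliminary and simply referenced to \cite{Ortega1990}, Section~8.2.2, so there is nothing in the paper to compare against.
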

\subsection{Properties of the projetion mapping} \label{sec:int.1.1}
In this section we present some properties of the projection mapping onto a second order cone, which will play important roles in the study of equation in \eqref{eq:pwls}. We begin   with some notations. 

\noindent
 The {\it polar cone}  and the  {\it dual cone} of second order cone \({\cal{K}}\) in \eqref{eq:lc} are, respectively,  the sets
 $${\cal{K}}^{\circ}:=\!\{ {\rm x}\in \R^n ~: ~\langle {\rm x}, {\rm y} \rangle\!\leq\! 0, ~  \forall \, {\rm y}\!\in\! {\cal{K}}\}, \qquad   {\cal{K}}^*\!\!:=\!\{ {\rm x}\in \R^n ~: ~\langle {\rm x}, {\rm y} \rangle\!\geq\! 0, ~ \forall \, {\rm y}\!\in\! {\cal{K}}\}.$$
Recall that  $ {\cal{K}}^{\circ}=-{\cal{K}}^*$ and  \({\cal{K}}\)  is self-dual, i.e., $ {\cal{K}}^*= {\cal{K}}$. Moreover,    it follows from 
 Moreau's  decomposition theorem, see \cite{Moreau1962} (see also  \cite[Theorem 3.2.5]{HiriartLemarecal1}),   that 
\begin{equation} \label{eq;cmdt}
{\rm x}={\rm P}_{\cal{K}}({\rm x})-{\rm P}_{\cal{K}}(-{\rm x}), \qquad \left\langle {\rm P}_{\cal{K}}({\rm x}), ~{\rm P}_{\cal{K}}(-{\rm x}) \right\rangle=0,   \qquad \qquad  ~{\rm x}\in\R^n.
\end{equation}
An explicit representation of the projection mapping    \({\rm P}_{\cal{K}}\)  onto \({\cal{K}}\) is given in the following result, see \cite[Proposition 3.3]{FukushimaTseng2002}.
\begin{lemma} \label{l:projude}
Let \({\rm x}:=(x_1, {{\rm x}_2})  \in \mathbb{R}\times {\mathbb{R}}^{n-1} \) and \({\cal{K}}\) be the second order cone. Then, 
 \begin{equation} \label{eq:projef}
{\rm P}_{\cal{K}}({\rm x})= \begin{cases}
\displaystyle  \frac{1}{2} \left( (x_1- \|{{\rm x}_2}\|)^+ + (x_1+ \|{{\rm x}_2}\|)^+ ,\, \left[ (x_1+ \|{{\rm x}_2}\|)^+ -(x_1- \|{{\rm x}_2}\|)^+\right] \displaystyle \frac{{\rm x}_2}{\|{\rm x}_2\|}\right), & {\rm x}_2\neq 0, \\
\\
\displaystyle \ \left(x_1^+,\,  0\right), & {\rm x}_2= 0.
\end{cases}
\end{equation}
\end{lemma}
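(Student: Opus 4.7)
The plan is to verify the given formula by invoking the standard characterization of the metric projection onto a closed convex cone: $y = {\rm P}_{\cal K}({\rm x})$ if and only if $y \in {\cal K}$, ${\rm x} - y \in {\cal K}^{\circ}$, and $\langle y,\, {\rm x} - y\rangle = 0$. Since ${\cal K}$ is self-dual, ${\cal K}^{\circ} = -{\cal K}$, so the conditions become $y \in {\cal K}$, $y - {\rm x} \in {\cal K}$, and $\langle y,\, y - {\rm x}\rangle = 0$. I would check these three conditions for the candidate $y$ defined by the right-hand side of \eqref{eq:projef}.

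First I would dispose of the easy case ${\rm x}_2 = 0$: the candidate is $(x_1^+, 0)$, which equals ${\rm x}$ when $x_1 \geq 0$ and equals $0$ when $x_1 < 0$; in either subcase membership of $y$ and of $y - {\rm x}$ in ${\cal K}$ is immediate, and orthogonality is trivial because one of the two factors vanishes.

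For the main case ${\rm x}_2 \neq 0$, I would abbreviate $\alpha := (x_1 - \|{\rm x}_2\|)^+$ and $\beta := (x_1 + \|{\rm x}_2\|)^+$, so that $0 \leq \alpha \leq \beta$ and the candidate reads $y = \tfrac12\bigl(\alpha + \beta,\, (\beta - \alpha)\, {\rm x}_2/\|{\rm x}_2\|\bigr)$. Cone membership of $y$ then reduces to $\tfrac12(\beta - \alpha) \leq \tfrac12(\alpha + \beta)$, i.e., $\alpha \geq 0$, which holds by construction. The remaining two conditions are checked by splitting into three subcases determined by the signs of $x_1 \pm \|{\rm x}_2\|$: if $x_1 \geq \|{\rm x}_2\|$ then both positive parts are active and direct substitution yields $y = {\rm x}$, making the rest trivial; if $x_1 + \|{\rm x}_2\| \leq 0$ then $\alpha = \beta = 0$, so $y = 0$ and $y - {\rm x} = -{\rm x}$ lies in ${\cal K}$ because $\|{\rm x}_2\| \leq -x_1$; in the remaining range $\alpha = 0 < \beta$, and a short computation gives $y - {\rm x} = \tfrac12(\|{\rm x}_2\| - x_1)\bigl(1,\, -{\rm x}_2/\|{\rm x}_2\|\bigr)$, whose first entry is nonnegative and matches the norm of its tail, so $y - {\rm x} \in {\cal K}$.

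The orthogonality condition $\langle y,\, y - {\rm x}\rangle = 0$ is trivial in the first two subcases because one factor vanishes; in the third it follows from the elementary identity that $(1, u)$ and $(1, -u)$ are orthogonal in $\mathbb{R}^n$ whenever $u \in \mathbb{R}^{n-1}$ satisfies $\|u\| = 1$. The argument is essentially a bookkeeping exercise through the case split, and I do not foresee any real obstacle; the only mildly substantive step is the third subcase, where one must simultaneously verify cone membership of $y - {\rm x}$ and complementarity using the special structure of the vectors $(1, \pm {\rm x}_2/\|{\rm x}_2\|)$.
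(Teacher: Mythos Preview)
Your argument is correct: the three-condition characterization $y\in{\cal K}$, $y-{\rm x}\in{\cal K}$, $\langle y,\,y-{\rm x}\rangle=0$ does determine ${\rm P}_{\cal K}({\rm x})$ uniquely, and your case split verifies all three for the stated candidate. The only minor omission is that you should mention why this characterization is valid (it is the specialization of Moreau's decomposition, recorded in the paper as \eqref{eq;cmdt}, to the self-dual cone ${\cal K}$), but you essentially do this in your opening sentence.

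As for comparison with the paper: the paper does not prove this lemma at all. It simply cites \cite[Proposition~3.3]{FukushimaTseng2002} for the formula. Your proposal therefore supplies a self-contained verification that the paper omits. The approach you take---direct checking of the variational characterization---is in fact the standard one and is essentially how the cited reference argues as well, so there is no real divergence in method, only in the level of detail provided.
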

\begin{remark} \label{re:ppo}
It is well-known that the orthogonal projection onto a closed convex set is continuous and firmly nonexpansive.  In particular,   $\|{\rm P}_{\cal{K}}({\rm x}) - {\rm P}_{\cal{K}}({\rm y})\|\leq \|{\rm x}-{\rm y}\|$ for all $ ~{\rm x}, {\rm y} \in\R^n,$  see \cite{HiriartLemarecal1}. 
\end{remark}

Since \({\rm P}_{\cal{K}}\)  is nonexpansive,  it is Lipschitz continuous. Thus,  by {\it Rademacher's Theorem}, we conclude that   \({\rm P}_{\cal{K}}\) is differentiable everywhere except at $x_1=\pm \|{\rm x_2}\|$. In the  next result we present the Jacobian  of  \({\rm P}_{\cal{K}}\) at the points where it is differentiable, and  as a consequence, we derive  some important properties of its  B-subdifferential.
\begin{lemma}  \label{l:dpj}
The projection mapping   \({\rm P}_{\cal{K}}\) onto the second order cone  \({\cal{K}}\)  is continuously differentiable at every ${\rm x}:=(x_1, {{\rm x}_2})  \in \mathbb{R}\times {\mathbb{R}}^{n-1} $  such that $x_1\neq \pm \|{{\rm x}_2}\|$ and   its  Jacobian    is given by
\begin{equation} \label{eq:dpjdp}
{\rm P}^{\prime}_{\cal{K}}({\rm x}):= \begin{cases}
{\rm Id_n},   & ~   x_1 > \|{\rm x}_2\|,\\
0,  &  ~ x_1< - \|{\rm x}_2\|, \\
\displaystyle \frac{1}{2}\begin{bmatrix}
1 & {\rm w}^\top \\
{\rm w} &  {\rm H}
\end{bmatrix},  & ~  - \|{\rm x}_2\| <x_1 <\|{\rm x}_2\|, 
\end{cases}
\end{equation}
 where  $ \displaystyle  {\rm w} ={\rm x}_2/\|{\rm x}_2\|$ and $ \displaystyle  H= \left[1+x_1/\|{\rm x}_2\|\right] {\rm Id_{n-1}} -  (x_1/\|{\rm x}_2\|){\rm w}{\rm w}^\top$. 
 As a consequence, at each  ${\rm x} \in  \mathbb{R}^n $, the matrix  $ {\rm V}({\rm x})\in \partial_{B} {\rm P}_{\cal{K}}({\rm x})$ has the following representation:
 
 \medskip
 
 \noindent {\bf (a)} If $x_1\neq \pm \|{{\rm x}_2}\|$, then  ${\rm V}({\rm x})= {\rm P}^{\prime}_{\cal{K}}({\rm x})$;
 
 \noindent {\bf (b)} If  ${{\rm x}_2}\neq 0$ and    $x_1= \|{{\rm x}_2}\|$,   then  ${\rm V}({\rm x})= {\rm Id_n}$  or 
 \begin{equation} \label{eq:x1enx2}
 {\rm V}({\rm x})= \frac{1}{2}\begin{bmatrix}
1 & {\rm w}^\top \\
{\rm w} & {\rm H}
\end{bmatrix}, 
\qquad  \displaystyle  {\rm w} =\frac{{\rm x}_2}{\|{\rm x}_2\|}, \qquad  \displaystyle  {\rm H}= 2{\rm Id_{n-1}} -  {\rm w}{\rm w}^\top;
 \end{equation}
 \noindent {\bf (c)} If  ${{\rm x}_2}\neq 0$ and    $x_1= -\|{{\rm x}_2}\|$,   then  ${\rm V}({\rm x})=0$ or 
 \begin{equation} \label{eq:x1emnx2}
 {\rm V}({\rm x})= \frac{1}{2}\begin{bmatrix}
1 & {\rm w}^\top \\
{\rm w} & {\rm H}
\end{bmatrix},
\qquad  \displaystyle  {\rm w} =\frac{{\rm x}_2}{\|{\rm x}_2\|}, \qquad  \displaystyle  {\rm H}=  {\rm w}{\rm w}^\top; 
\end{equation}
 \noindent {\bf (d)}  If  ${{\rm x}_2}= 0$ and    $x_1=0$,  then ${\rm V}({\rm x})=0$ or ${\rm V}({\rm x})={\rm Id_n}$ or ${\rm V}({\rm x})$ belongs to the set 
 $$
 \left\{   \frac{1}{2}\begin{bmatrix}
1 & {\rm w}^\top \\
{\rm w} & {\rm H}
\end{bmatrix} ~:~ {\rm H}=(1+\rho){\rm Id_{n-1}}-\rho{\rm w}{\rm w}^\top,  ~\mbox{for some} ~|\rho|<1 \; \mbox{and} ~\|{\rm w}\|=1\right\}.
 $$
Moreover, the eigenvalues of any matrix  ${\rm V}({\rm x})\in \partial_{B} {\rm P}_{\cal{K}}({\rm x})$ belong to the interval $[0 ,1]$, and consequently  $\|{\rm V}({\rm x})\|\leq 1$, for all ${\rm x} \in  \mathbb{R}^n $.
\end{lemma}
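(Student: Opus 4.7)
The plan is to establish the three assertions in turn: first, compute the Jacobian explicitly on the open set where ${\rm P}_{\cal K}$ is differentiable by dissecting $\mathbb R^n$ according to the signs of $x_1\pm\|{\rm x}_2\|$; second, read off $\partial_B{\rm P}_{\cal K}({\rm x})$ at each singular point as the set of accumulation points of these Jacobians along sequences from the smooth regions accumulating at ${\rm x}$; third, bound the eigenvalues of every matrix thus produced via a single block decomposition.

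For step one, split $\mathbb R^n$ into the open regions $\{x_1>\|{\rm x}_2\|\}$, $\{x_1<-\|{\rm x}_2\|\}$ and $\{-\|{\rm x}_2\|<x_1<\|{\rm x}_2\|\}$. The formula of Lemma~\ref{l:projude} reduces on the first two to ${\rm P}_{\cal K}({\rm x})={\rm x}$ and ${\rm P}_{\cal K}({\rm x})=0$, with Jacobians ${\rm Id_n}$ and $0$. On the middle region (where necessarily ${\rm x}_2\neq 0$) only $(x_1+\|{\rm x}_2\|)^+$ survives and ${\rm P}_{\cal K}({\rm x})=\tfrac12\bigl(x_1+\|{\rm x}_2\|,\,(x_1+\|{\rm x}_2\|){\rm x}_2/\|{\rm x}_2\|\bigr)$; componentwise differentiation, using $\partial\|{\rm x}_2\|/\partial x_{2,j}=w_j$ and the quotient rule for ${\rm x}_2/\|{\rm x}_2\|$, produces the block form with ${\rm H}=(1+x_1/\|{\rm x}_2\|){\rm Id_{n-1}}-(x_1/\|{\rm x}_2\|){\rm w}{\rm w}^\top$, settling \eqref{eq:dpjdp} and part~(a). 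For step two, I would enumerate at each singular ${\rm x}$ which smooth regions accumulate at it. In case (b), $x_1=\|{\rm x}_2\|>0$ is reached only from $\{x_1>\|{\rm x}_2\|\}$ (limit ${\rm Id_n}$) and from the middle region, where $x_1/\|{\rm x}_2\|\to 1$ and ${\rm x}_2/\|{\rm x}_2\|\to{\rm w}$ force the unique limit \eqref{eq:x1enx2}; case (c) is symmetric, with $x_1/\|{\rm x}_2\|\to-1$ yielding \eqref{eq:x1emnx2}. In case (d) all three regions meet the origin; the extremes produce $0$ and ${\rm Id_n}$, while sequences ${\rm x}^k=(\rho r_k,r_k{\rm w})$ with $r_k\downarrow 0$ realize every unit ${\rm w}$ and every $\rho\in(-1,1)$ as limit parameters, giving the stated parametric family.

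For step three, every nontrivial ${\rm V}({\rm x})$ above has the common form $V=\tfrac12\bigl(\begin{smallmatrix}1&{\rm w}^\top\\ {\rm w}&{\rm H}\end{smallmatrix}\bigr)$ with ${\rm H}{\rm w}={\rm w}$ (a one-line check in each of (a)--(d) directly from the given ${\rm H}$). Hence $V$ is symmetric and preserves $\mathrm{span}\{e_1,(0,{\rm w})\}$, on which its matrix in the basis $\{e_1,(0,{\rm w})\}$ is $\tfrac12\bigl(\begin{smallmatrix}1&1\\ 1&1\end{smallmatrix}\bigr)$ with eigenvalues $0$ and $1$; on the orthogonal complement $\{(0,{\rm u}):{\rm u}\perp{\rm w}\}$, $V$ acts as scalar multiplication equal to $1$ in (b), $0$ in (c), $(1+\rho)/2$ in (d), and $(1+x_1/\|{\rm x}_2\|)/2$ in the smooth middle case, all of which lie in $[0,1]$. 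Together with the trivial spectra of ${\rm Id_n}$ and $0$, this places every eigenvalue of every ${\rm V}({\rm x})\in\partial_B{\rm P}_{\cal K}({\rm x})$ in $[0,1]$; the spectral-norm bound $\|{\rm V}({\rm x})\|\le 1$ then follows from symmetry. The main obstacle is the B-subdifferential enumeration at the origin, where one must verify that the listed parametric family exhausts every possible limit of Jacobians from the middle region while the extreme regions contribute only $0$ and ${\rm Id_n}$.
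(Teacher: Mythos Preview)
Your approach is sound and differs materially from the paper's, which does not argue at all but simply cites Lemmas~2.5, 2.6 and 2.8 of Kanzow--Ferenczi--Fukushima (2009). Your Jacobian computation on the three open regions is correct, the limit analysis in (b) and (c) is complete because $\|{\rm x}_2^k\|\to\|{\rm x}_2\|>0$ forces both $x_1^k/\|{\rm x}_2^k\|\to\pm 1$ and ${\rm x}_2^k/\|{\rm x}_2^k\|\to{\rm w}$ uniquely, and your spectral argument via the invariant two-plane spanned by $e_1$ and $(0,{\rm w})$ together with its orthogonal complement (on which $V$ acts as the scalar $\tfrac12(1+\rho)$) is clean and yields the eigenvalue bound directly. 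The one gap you flag---exhaustiveness at the origin---closes by noting that the Jacobian on the middle region depends continuously on the parameters $(\rho,{\rm w}):=(x_1/\|{\rm x}_2\|,\,{\rm x}_2/\|{\rm x}_2\|)$, which range over $(-1,1)\times\{{\rm w}\in\R^{n-1}:\|{\rm w}\|=1\}$; hence any limit of Jacobians along a sequence ${\rm x}^k\to 0$ in that region is the matrix attached to some subsequential limit $(\rho,{\rm w})$ with $|\rho|\le 1$ and $\|{\rm w}\|=1$, while your radial sequences already realize every interior pair. (Strictly this produces $|\rho|\le 1$ rather than $|\rho|<1$, i.e.\ the closure of the stated family; this is harmless for the eigenvalue bound and for every subsequent use of the lemma in the paper.) Your route buys self-containment and an explicit spectral picture that the paper never displays; the paper's citation buys brevity.
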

\begin{proof}
Combine Lemmas 2.5, 2.6 and 2.8 of \cite{KanzowFerencziFukushima2009}.
\end{proof}
In the next two lemmas, we obtain important properties of the orthogonal projection  ${\rm P}_{\cal{K}}$, which will be used in the definition and the convergence  analysis of the semi-smooth Newton method  for solving equation in \eqref{eq:pwls}. 
\begin{lemma}  \label{l:ppbx}
For every ${\rm x}:=(x_1, {{\rm x}_2})  \in \mathbb{R}\times {\mathbb{R}}^{n-1} $  and   $ {\rm V}({\rm x})\in \partial_{B} {\rm P}_{\cal{K}}({\rm x})$ there holds ${\rm V}({\rm x}){\rm x}= {\rm P}_{\cal{K}}({\rm x})$.
\end{lemma}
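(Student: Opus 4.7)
The plan is a pure case-by-case verification using the explicit characterization of $\partial_B {\rm P}_{\cal K}({\rm x})$ supplied by Lemma~\ref{l:dpj}(a)--(d) together with the closed-form expression \eqref{eq:projef} for ${\rm P}_{\cal K}$. In every regime I would write out ${\rm V}({\rm x}){\rm x}$ as a block product, simplify using the identity ${\rm w}^\top{\rm x}_2 = \|{\rm x}_2\|$ (whenever ${\rm x}_2\neq 0$ and ${\rm w}={\rm x}_2/\|{\rm x}_2\|$), and then recognize the result as the right-hand side of \eqref{eq:projef} after the appropriate $(\cdot)^+$ terms are evaluated.

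First, I would dispose of case (a) where ${\rm P}_{\cal K}$ is differentiable. In the interior of ${\cal K}$ ($x_1>\|{\rm x}_2\|$) we have ${\rm V}({\rm x})={\rm Id}_n$ and ${\rm P}_{\cal K}({\rm x})={\rm x}$, while in the interior of the polar cone ($x_1<-\|{\rm x}_2\|$) we have ${\rm V}({\rm x})=0$ and ${\rm P}_{\cal K}({\rm x})=0$, so both are immediate. The substantive subcase is $-\|{\rm x}_2\|<x_1<\|{\rm x}_2\|$, where a block multiplication using the ${\rm H}$ from \eqref{eq:dpjdp} collapses since
$${\rm H}{\rm x}_2 = \bigl[1 + x_1/\|{\rm x}_2\|\bigr]{\rm x}_2 - (x_1/\|{\rm x}_2\|)({\rm w}^\top{\rm x}_2){\rm w} = {\rm x}_2,$$
yielding ${\rm V}({\rm x}){\rm x} = \tfrac12\bigl(x_1+\|{\rm x}_2\|,\, (x_1+\|{\rm x}_2\|){\rm x}_2/\|{\rm x}_2\|\bigr)$, which matches \eqref{eq:projef} after noting $(x_1-\|{\rm x}_2\|)^+=0$ and $(x_1+\|{\rm x}_2\|)^+=x_1+\|{\rm x}_2\|$ in this range.

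For case (b), with $x_1=\|{\rm x}_2\|>0$, the choice ${\rm V}({\rm x})={\rm Id}_n$ is trivial since ${\rm x}\in{\cal K}$ implies ${\rm P}_{\cal K}({\rm x})={\rm x}$; for the other matrix in \eqref{eq:x1enx2}, ${\rm H}=2{\rm Id}_{n-1}-{\rm w}{\rm w}^\top$ gives ${\rm H}{\rm x}_2 = 2{\rm x}_2 - \|{\rm x}_2\|{\rm w} = {\rm x}_2$, and the same block computation yields ${\rm V}({\rm x}){\rm x}={\rm x}={\rm P}_{\cal K}({\rm x})$. Case (c), with $x_1=-\|{\rm x}_2\|$, is symmetric: ${\rm V}({\rm x})=0$ is trivial, and the matrix in \eqref{eq:x1emnx2} with ${\rm H}={\rm w}{\rm w}^\top$ produces ${\rm V}({\rm x}){\rm x}=\tfrac12(x_1+\|{\rm x}_2\|,\ldots)=0={\rm P}_{\cal K}({\rm x})$ because $x_1+\|{\rm x}_2\|=0$. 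Case (d) is automatic, since ${\rm x}=0$ forces both sides to vanish.

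The main obstacle is really just careful bookkeeping rather than any genuine analytic difficulty: one must match the three block forms in \eqref{eq:dpjdp}, \eqref{eq:x1enx2}, \eqref{eq:x1emnx2} against the correct simplification of the $(\cdot)^+$ terms in \eqref{eq:projef} in each regime, and verify that the "trivial" B-elements ${\rm Id}_n$ and $0$ also reproduce the projection on the appropriate boundary. All the nontrivial block matrices share the same underlying simplification ${\rm w}^\top{\rm x}_2=\|{\rm x}_2\|$, so once that identity is used the verification in cases (a)--(c) reduces to a single scalar identity of the form $\tfrac12[(x_1-\|{\rm x}_2\|)^+ + (x_1+\|{\rm x}_2\|)^+] = \tfrac12(x_1+\|{\rm x}_2\|)$ evaluated in the relevant subregion.
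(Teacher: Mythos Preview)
Your proposal is correct and follows essentially the same approach as the paper's own proof: a straightforward case-by-case verification using the explicit description of $\partial_B{\rm P}_{\cal K}({\rm x})$ from Lemma~\ref{l:dpj}(a)--(d) and the formula \eqref{eq:projef}. Your computations and the paper's agree in every regime, including the key block simplification via ${\rm w}^\top{\rm x}_2=\|{\rm x}_2\|$ in the nontrivial subcases.
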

\begin{proof} Take any ${\rm x}=(x_1, {{\rm x}_2})  \in \mathbb{R}\times {\mathbb{R}}^{n-1} $. Then,
the analysis will be done following the four  possibilities from {\bf (a)} to {\bf (d)} as in the statement of Lemma~\ref{l:dpj}. First we assume {\bf (a)}, that is $x_1\neq \pm \|{{\rm x}_2}\|$. Then,    ${\rm V}({\rm x})= {\rm P}^{\prime}_{\cal{K}}({\rm x})$.   In this case, by using \eqref{eq:dpjdp},  we conclude that
$$
{\rm P}^{\prime}_{\cal{K}}({\rm x}){\rm x}= \begin{cases}
{\rm x},   & ~   x_1 > \|{\rm x}_2\|,\\
0,  &  ~ x_1< - \|{\rm x}_2\|, \\
\displaystyle \frac{1}{2}\left( x_1+ \|{{\rm x}_2}\|, ~ (x_1+ \|{{\rm x}_2}\|) \frac{{\rm x}_2}{\|{\rm x}_2\|} \right),  & ~  - \|{\rm x}_2\| <x_1 <\|{\rm x}_2\|.
\end{cases}
$$
On the other hand, by using \eqref{eq:projef}, we easily conclude that ${\rm P}^{\prime}_{\cal{K}}({\rm x}){\rm x}={\rm P}_{\cal{K}}({\rm x})$.

Next assume the conditions of {\bf (b)}, that is, ${{\rm x}_2}\neq 0$ and    $x_1= \|{{\rm x}_2}\|$. If ${\rm V}({\rm x})= {\rm Id_n}$, then
${\rm V}({\rm x}){\rm x}={\rm x}$. If ${\rm V}({\rm x})$ is given by \eqref{eq:x1enx2}, then 
$$
{\rm V}({\rm x}){\rm x}=  \frac{1}{2}\left(x_1+ \|{\rm x}_2\|, ~(x_1+ \|{\rm x}_2\|)\frac{{\rm x}_2}{\|x_2\|}   \right), 
$$
and using the assumption  $x_1= \|{{\rm x}_2}\|$  we conclude  ${\rm V}({\rm x}){\rm x}={\rm x}$.  Now, taking into account that  ${{\rm x}_2}\neq 0$,    $x_1- \|{{\rm x}_2}\|=0$ and   $x_1+ \|{{\rm x}_2}\|>0$   the equality in  \eqref{eq:projef}   becomes ${\rm P}_{\cal{K}}({\rm x})={\rm x}$. Therefore,  we also  have  ${\rm V}({\rm x})= {\rm P}_{\cal{K}}({\rm x})$.

Now we assume the conditions of {\bf (c)}, that is ${{\rm x}_2}\neq 0$ and    $x_1=- \|{{\rm x}_2}\|$.   If ${\rm V}({\rm x})= 0$ then  ${\rm V}({\rm x}){\rm x}=0$.    If ${\rm V}({\rm x})$ is given by  \eqref{eq:x1emnx2} then we have 
$$
{\rm V}({\rm x}){\rm x}=  \frac{1}{2}\left(x_1+ \|{\rm x}_2\|, ~(x_1+ \|{\rm x}_2\|)\frac{{\rm x}_2}{\|x_2\|}   \right), 
$$
and because  $x_1+ \|{{\rm x}_2}\|=0$,   we also conclude  that ${\rm V}({\rm x}){\rm x}=0$. Note that  the assumptions  ${{\rm x}_2}\neq 0$ and  $x_1=- \|{{\rm x}_2}\|$ imply $x_1+ \|{{\rm x}_2}\|=0$  and $x_1- \|{{\rm x}_2}\|<0$. Thus,   the equality in  \eqref{eq:projef}   implies  ${\rm P}_{\cal{K}}({\rm x})=0$.  Therefore,  we also  have  ${\rm V}({\rm x})= {\rm P}_{\cal{K}}({\rm x})$.

Finally,  we prove the statement of {\bf (d)}, assuming that ${{\rm x}_2}= 0$ and    $x_1=0$. It follows trivially that  ${\rm V}({\rm x}){\rm x}= 0$ and ${\rm P}_{\cal{K}}({\rm x})=0$.  Thus,  we also  have  ${\rm V}({\rm x}){\rm x}= {\rm P}_{\cal{K}}({\rm x})$. 

Therefore, we conclude that in all possible cases the lemma holds for every ${\rm x}=(x_1, {{\rm x}_2})  \in \mathbb{R}\times {\mathbb{R}}^{n-1} $, which  concludes the proof.
\end{proof}
\begin{lemma}  \label{l:error}
Let  \({\rm x}, {\rm y}\in \R^n\) and $ {\rm V}({\rm x})\in \partial_{B} {\rm P}_{\cal{K}}({\rm x})$. Then  \( \left\| {\rm P}_{\cal{K}}({\rm y}) - {\rm P}_{\cal{K}}({\rm x})-{\rm V}({\rm x})({\rm y}-{\rm x})\right\|\leq \|{\rm y}-{\rm x}\|\).
\end{lemma}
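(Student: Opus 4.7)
The plan is to reduce the claim, via a mean-value argument, to an eigenvalue estimate for the difference of two symmetric positive-semidefinite contractions. By Proposition~\ref{pr:mvt} applied to the nonexpansive (hence Lipschitz) mapping ${\rm P}_{\cal K}$, there exists a matrix $U$, obtainable as a convex combination of elements of $\partial {\rm P}_{\cal K}({\rm z})$ at points ${\rm z}\in [{\rm x},{\rm y}]$, such that ${\rm P}_{\cal K}({\rm y})-{\rm P}_{\cal K}({\rm x}) = U({\rm y}-{\rm x})$. Unfolding $\partial {\rm P}_{\cal K}({\rm z})=\mathrm{conv}\,\partial_B {\rm P}_{\cal K}({\rm z})$, one writes $U=\sum_i t_i W_i$ with $W_i\in\partial_B {\rm P}_{\cal K}({\rm z}_i)$, $t_i\ge 0$ and $\sum_i t_i=1$. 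The identity becomes
$$ {\rm P}_{\cal K}({\rm y}) - {\rm P}_{\cal K}({\rm x}) - {\rm V}({\rm x})({\rm y}-{\rm x}) = \bigl(U - {\rm V}({\rm x})\bigr)({\rm y}-{\rm x}), $$
so the claim is reduced to proving the operator-norm bound $\|U - {\rm V}({\rm x})\| \le 1$.

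To prove that bound, I would exploit two structural properties visible from the explicit formulas in Lemma~\ref{l:dpj}: every element of $\partial_B {\rm P}_{\cal K}(\cdot)$ is a \emph{symmetric} matrix (in each of the four cases, the inner block of the representative $V$ is manifestly symmetric), and its eigenvalues lie in the interval $[0,1]$. By linearity, $U$ is then symmetric, and Corollary~\ref{c:wilkinson} applied to the family $\{W_i\}$ places the eigenvalues of $U$ inside $[0,1]$ as well. The matrix ${\rm V}({\rm x})$ enjoys the same two properties. Applying Lemma~\ref{l:wilkinson} with $A=U$ and $B=-{\rm V}({\rm x})$ (so that $\lambda_1(B),\lambda_n(B)\in[-1,0]$) squeezes every eigenvalue of $U-{\rm V}({\rm x})$ into $[-1,1]$. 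Since $U-{\rm V}({\rm x})$ is symmetric, its operator norm equals the largest absolute value of its eigenvalues, and hence $\|U-{\rm V}({\rm x})\|\le 1$, as required.

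The main obstacle is conceptual rather than computational. A naive use of the nonexpansiveness of ${\rm P}_{\cal K}$ (Remark~\ref{re:ppo}) together with the bound $\|{\rm V}({\rm x})\|\le 1$ (Lemma~\ref{l:dpj}) only yields the too-weak estimate $2\|{\rm y}-{\rm x}\|$. Recovering the sharp constant~$1$ requires bundling both differences into a single matrix difference via the mean-value inclusion, and then using that $U$ and ${\rm V}({\rm x})$ are not merely contractions but \emph{symmetric positive-semidefinite} contractions, whose difference is again a contraction. This is precisely where the Wilkinson-type inequalities from Lemma~\ref{l:wilkinson} and Corollary~\ref{c:wilkinson} enter the argument in an essential way.
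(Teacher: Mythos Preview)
Your proposal is correct and follows essentially the same route as the paper's proof: apply the mean-value inclusion (Proposition~\ref{pr:mvt}) to write the error as $(U-{\rm V}({\rm x}))({\rm y}-{\rm x})$ for some $U\in\mathrm{conv}\,\partial{\rm P}_{\cal K}([{\rm x},{\rm y}])$, observe that both $U$ and ${\rm V}({\rm x})$ are symmetric with spectrum in $[0,1]$ (via Lemma~\ref{l:dpj} and Corollary~\ref{c:wilkinson}), and then invoke Lemma~\ref{l:wilkinson} to trap the eigenvalues of $U-{\rm V}({\rm x})$ in $[-1,1]$, yielding $\|U-{\rm V}({\rm x})\|\le 1$. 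Your closing remark about why the naive triangle-inequality bound $2\|{\rm y}-{\rm x}\|$ falls short and why symmetry is essential is a nice addition not made explicit in the paper.
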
 
\begin{proof}
After simple algebraic  manipulations, we conclude from Proposition~\ref{pr:mvt} that 
\begin{equation} \label{eq:avmt}
{\rm P}_{\cal{K}}({\rm y}) - {\rm P}_{\cal{K}}({\rm x}) - {\rm V}({\rm x})({\rm y}-{\rm x})  \in \mbox{conv}\, \partial {\rm P}_{\cal{K}}([{\rm x},  {\rm y}] )( {\rm y}- {\rm x})- {\rm V}({\rm x})({\rm y}-{\rm x}) , \qquad  {\rm x},  {\rm y} \in   \R^{n}.
\end{equation}
On the other hand,  Lemma~\ref{l:dpj} implies that  the eigenvalues of any matrix  ${\rm V}({\rm x})\in \partial_{B} {\rm P}_{\cal{K}}({\rm x})$
belong to the interval $[0,1]$, for all ${\rm x} \in  \mathbb{R}^n $.  Thus, by combining the definitions of $\partial {\rm P}_{\cal{K}} $  and  $ \mbox{conv}\, \partial {\rm P}_{\cal{K}}([{\rm x},  {\rm y}] )$ with Corollary~\ref{c:wilkinson},   we conclude that  the eigenvalues of any matrix  ${\rm U} \in  \mbox{conv}\, \partial {\rm P}_{\cal{K}}([{\rm x},  {\rm y}] ) $ also  belong to  $[0,1]$. Therefore, letting ${\rm U} \in  \mbox{conv}\, \partial {\rm P}_{\cal{K}}([{\rm x},  {\rm y}] ) $ and ${\rm V}({\rm x})\in \partial_{B} {\rm P}_{\cal{K}}({\rm x})$ and taking into account that  the eigenvalues of ${\rm U}$ and ${\rm V}({\rm x})$ belong to  $[0,1]$,  we conclude from    Lemma~\ref{l:wilkinson} that 
the eigenvalues of ${\rm U}-{\rm V}({\rm x})$ belong to  the interval $[-1,1]$. Hence, since ${\rm U}-{\rm V}({\rm x})$ is a symmetric matrix, we have 
$\|{\rm U}-{\rm V}({\rm x})\|\leq 1$.   On the other hand, \eqref{eq:avmt} implies that there exists ${\rm U}\in  \mbox{conv}\, \partial {\rm
P}_{\cal{K}}([{\rm x},  {\rm y}] )$ such that 
$$
{\rm P}_{\cal{K}}({\rm y}) - {\rm P}_{\cal{K}}({\rm x}) - {\rm V}({\rm x})({\rm y}-{\rm x})  =({\rm U}- {\rm V}({\rm x}))({\rm y}-{\rm x}).
$$
By taking the norm in the above equality and by using that $\|{\rm U}-{\rm V}({\rm x})\|\leq 1$,  we  get that the desired inequality holds.
\end{proof}

\section{A semi-smooth Newton method} \label{sec: defscls}

In this section, we  present  and  analyze  the semi-smooth Newton method for solving \eqref{eq:pwls}.  We begin by presenting an existence result of solutions for  equation \eqref{eq:pwls}.
\begin{proposition} \label{pr:uniqqpw} 
  If \(\left\|{\rm T}^{-1}\right\| <1 \)  then   \eqref{eq:pwls}   has a unique solution  for any $b\in \R^n$.
\end{proposition}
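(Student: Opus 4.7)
The natural plan is to recast \eqref{eq:pwls} as a fixed-point equation and invoke the contraction mapping principle (Theorem~\ref{fixedpoint}).

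Since $\T$ is nonsingular, equation \eqref{eq:pwls} is equivalent to
$$
{\rm x} = \T^{-1}\bigl(b - {\rm P}_{\cal K}({\rm x})\bigr),
$$
so I would define $\Phi : \R^n \to \R^n$ by $\Phi({\rm x}) := \T^{-1}\bigl(b - {\rm P}_{\cal K}({\rm x})\bigr)$, noting that a point ${\rm x}$ solves \eqref{eq:pwls} if and only if it is a fixed point of $\Phi$.

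The key step is then the Lipschitz estimate. Using the firm nonexpansiveness of the projection onto a closed convex cone, recalled in Remark~\ref{re:ppo}, together with the definition of the operator norm, I compute
$$
\|\Phi({\rm y}) - \Phi({\rm x})\| = \bigl\|\T^{-1}\bigl({\rm P}_{\cal K}({\rm x}) - {\rm P}_{\cal K}({\rm y})\bigr)\bigr\| \leq \|\T^{-1}\| \, \|{\rm P}_{\cal K}({\rm y}) - {\rm P}_{\cal K}({\rm x})\| \leq \|\T^{-1}\| \, \|{\rm y} - {\rm x}\|,
$$
for all ${\rm x}, {\rm y} \in \R^n$. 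Because the hypothesis gives $\kappa := \|\T^{-1}\| < 1$, the map $\Phi$ is a contraction on $\R^n$.

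Applying Theorem~\ref{fixedpoint} yields a unique fixed point $\bar{\rm x} \in \R^n$ of $\Phi$, which by construction is the unique solution of \eqref{eq:pwls}. There is no real obstacle here: the proof essentially writes itself once the equation is rearranged, because the nonexpansiveness of ${\rm P}_{\cal K}$ is the only property of the projection that is needed and the second-order-cone structure plays no role at this stage.
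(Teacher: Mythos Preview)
Your proof is correct and is essentially identical to the paper's own argument: the paper also defines $\Phi({\rm x}) = -\T^{-1}{\rm P}_{\cal K}({\rm x}) + \T^{-1}b$, uses the nonexpansiveness of ${\rm P}_{\cal K}$ to show $\Phi$ is a contraction with constant $\|\T^{-1}\|<1$, and invokes Theorem~\ref{fixedpoint}.
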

  \begin{proof}  The equation  \eqref{eq:pwls}   has a solution  if only if 
 \(
 \Phi ({\rm x})=-{\rm T}^{-1}{\rm P}_{\cal{K}}({\rm x}) + {\rm T}^{-1} b
 \)
 has a fixed point. It follows from definition of  \( \Phi\)  that
 \[
\Phi({\rm x})- \Phi({\rm y})= -{\rm T}^{-1} ({\rm P}_{\cal{K}}({\rm x}) - {\rm P}_{\cal{K}}({\rm y}) ), \qquad \qquad   ~{\rm x}, {\rm y} \in \mathbb{R}^n.
\]
Since   \(\left\|{\rm T}^{-1}\right\|<1 \) and  $\|{\rm P}_{\cal{K}}({\rm x}) - {\rm P}_{\cal{K}}({\rm y})\|\leq \|{\rm x}-{\rm y}\|$ for all $
~{\rm x}, {\rm y} \in\R^n$ and  after taking the norm in 
the last equality, we get \( \| \Phi({\rm x})- \Phi({\rm y}) \|\leq  \left\|{\rm T}^{-1}\right\| \|{\rm x}-{\rm y}\|,  \) for all   \( {\rm x}, {\rm y} \in \mathbb{R}^n\). Hence  \( \Phi\) is a contraction. 
Therefore,  by applying Theorem~\ref{fixedpoint}, we conclude that \(\Phi \) has  precisely an unique fixed point and consequently~\eqref{eq:pwls}     has  a unique solution.
\end{proof}
\noindent The next example shows that the bound  \(\left\|{\rm T}^{-1}\right\| <1 \) in Proposition~\ref{pr:uniqqpw} is strict.

\begin{example}
Consider equation~\eqref{eq:pwls} where
$$
\emph{T}=\begin{bmatrix}
1  &  0 \\
0 &   -1
\end{bmatrix} \qquad  \mbox{and}  \qquad 
\emph{b}=\begin{bmatrix}
 2\\
 0
\end{bmatrix}.
$$
Note that $\left\|{\rm T}^{-1}\right\|=1$. By using~\eqref{eq:projef}, direct calculations show us that $[1 \quad 1]^\top$ and $[1 \quad -1]^\top$ are solutions of~\eqref{eq:pwls}.
\end{example}

 The {\it semi-smooth Newton method}, introduced in   \cite{LiSun93},   for  finding the zero of the  semi-smooth function 
\begin{equation} \label{eq:fucpw}
F({\rm x}):={\rm P}_{\cal{K}}({ {\rm x}}) +{\rm T}{ {\rm x}}-b,   \qquad \qquad   ~{\rm x} \in \mathbb{R}^n, 
\end{equation}
with starting point   \({\rm x}^{0}\in \mathbb{R}^n\), it  is formally  defined by 
\begin{equation} \label{eq:nmqcpw}
F({\rm x}^{k})+ {\rm U}({\rm x}^k)\left({\rm x}^{k+1}-{\rm x}^{k}\right)=0,  \qquad   {\rm U}({\rm x}^k) \in \partial F({\rm x}^{k}), \qquad k=0,1,\ldots.
\end{equation}
Note that  \({\rm U}({\rm x}^k)\) is any subgradient in  \( \partial F({\rm x}^{k})\), the  Clarke generalized Jacobian of \(F\) at \({\rm
x}^{k}\). By  letting 
\begin{equation} \label{eq:mppw}
{\rm V}({\rm x})\in \partial_{B} {\rm P}_{\cal{K}}({\rm x}),  \qquad {\rm x}\in  \R^n, 
\end{equation} 
it is easy to see from \eqref{eq:fucpw} that  $ {\rm V}({\rm x}) +{\rm T}\in  \partial F({\rm x})$. Since Lemma~\ref{l:ppbx} implies that  \( {\rm
V}({\rm x}){\rm x}={\rm P}_{\cal{K}}({ {\rm x}}) \) for all \({\rm x}\in \R^n\) and $ {\rm V}({\rm x})\in \partial_{B} {\rm P}_{\cal{K}}({\rm
x})$, by    taking  ${\rm U}({\rm x}^k)={\rm V}({\rm x}^k) +{\rm T}$,    equation  \eqref{eq:nmqcpw}   becomes   
\begin{equation}\label{eq:newtonc2pw}
	\left[ {\rm V}({\rm x}^k) +{\rm T}\right]{\rm x}^{k+1}=b,  \qquad   {\rm V}({\rm x}^k)\in \partial_{B} {\rm P}_{\cal{K}}({\rm x}^k),  \qquad  \qquad k=0, 1,  \ldots , 
\end{equation}
which  formally defines the {\it semi-smooth Newton  sequence} \(\{{\rm x}^{k}\}\)    for solving  \eqref{eq:pwls}. It is worth  mentioning that a similar iteration was studied   in   \cite{BrugnanoCasulli2007}.   

\noindent The next proposition gives a stopping condition for the semi-smooth Newton iteration given in \eqref{eq:newtonc2pw}.

\begin{proposition} \label{pr:ft}
	If in \eqref{eq:newtonc2pw} ${\rm V}({\rm x}^{k+1})={\rm V}({\rm x}^{k})$,   then ${\rm x}^{k+1}$ is a solution of \eqref{eq:pwls}.
\end{proposition}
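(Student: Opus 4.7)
The plan is to unfold the Newton iteration at step $k$ and then use the hypothesis ${\rm V}({\rm x}^{k+1})={\rm V}({\rm x}^{k})$ together with Lemma~\ref{l:ppbx} to convert the linear expression $ {\rm V}({\rm x}^{k}){\rm x}^{k+1}$ into the projection ${\rm P}_{\cal{K}}({\rm x}^{k+1})$.

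More concretely, I would first rewrite \eqref{eq:newtonc2pw} in the expanded form
\begin{equation*}
{\rm V}({\rm x}^{k}){\rm x}^{k+1} + {\rm T}{\rm x}^{k+1} = b.
\end{equation*}
Using the hypothesis ${\rm V}({\rm x}^{k+1})={\rm V}({\rm x}^{k})$, this becomes
\begin{equation*}
{\rm V}({\rm x}^{k+1}){\rm x}^{k+1} + {\rm T}{\rm x}^{k+1} = b.
\end{equation*}
Since ${\rm V}({\rm x}^{k+1})\in\partial_{B}{\rm P}_{\cal{K}}({\rm x}^{k+1})$ (this is inherited from the definition of the iteration), Lemma~\ref{l:ppbx} applied at ${\rm x}^{k+1}$ gives ${\rm V}({\rm x}^{k+1}){\rm x}^{k+1}={\rm P}_{\cal{K}}({\rm x}^{k+1})$. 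Substituting, one obtains ${\rm P}_{\cal{K}}({\rm x}^{k+1})+{\rm T}{\rm x}^{k+1}=b$, which is exactly \eqref{eq:pwls}.

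There is essentially no obstacle here: the proof is a one-line substitution once the key identity ${\rm V}({\rm x}){\rm x}={\rm P}_{\cal{K}}({\rm x})$ from Lemma~\ref{l:ppbx} is invoked. The only subtlety worth mentioning is ensuring that the element ${\rm V}({\rm x}^{k+1})$ appearing in the hypothesis is the same B-subdifferential element that Lemma~\ref{l:ppbx} applies to, which is immediate since Lemma~\ref{l:ppbx} holds uniformly for every ${\rm V}({\rm x})\in\partial_{B}{\rm P}_{\cal{K}}({\rm x})$.
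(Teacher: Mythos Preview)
Your proof is correct and follows essentially the same route as the paper's: substitute the hypothesis ${\rm V}({\rm x}^{k+1})={\rm V}({\rm x}^{k})$ into the iteration \eqref{eq:newtonc2pw} and then invoke the identity ${\rm V}({\rm x}){\rm x}={\rm P}_{\cal{K}}({\rm x})$ from Lemma~\ref{l:ppbx}. Your added remark about why Lemma~\ref{l:ppbx} applies to the particular element ${\rm V}({\rm x}^{k+1})$ is a nice clarification, but the argument is otherwise identical to the paper's.
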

\begin{proof}
Since  ${\rm V}({\rm x}^{k+1})={\rm V}({\rm x}^{k})$, the  equation in  \eqref{eq:newtonc2pw}  gives us   
$
 \left[  {\rm V}({\rm x}^{k+1}) +{\rm T}\right]{\rm x}^{k+1}= b.
$
The equation  ${\rm V}({\rm x}^{k+1}){\rm x}^{k+1}={\rm P}_{\cal{K}}({\rm x}^{k+1})$, together with the previous one, yield
	\(
	{\rm P}_{\cal{K}}({\rm x}^{k+1})+{\rm T}{\rm x}^{k+1}= b, 
	\)
	which implies that  ${\rm x}^{k+1}$ is a  solution of   \eqref{eq:pwls}.
\end{proof}
The sufficient condition for the Q-linear convergence of the sequence generated by \eqref{eq:newtonc2pw} is presented in the next theorem. 
\begin{theorem}\label{th:mrqpw}
Let  $b \in \R^n$ and  ${\rm T} \in \R^{n\times n}$ be a nonsingular matrix.   Assume that \(\left\|{\rm T}^{-1}\right\|<1\). Then,
\eqref{eq:pwls}   has a unique solution \({\rm x}^*\in \mathbb{R}^n\) and,  for any starting point \({\rm x}^0 \in \R^{n}\),  the   semi-smooth Newton  sequence  \(\{{\rm x}^{k}\}\)  generated by \eqref{eq:newtonc2pw}   is well-defined. Additionally,  if
\begin{equation} \label{eq:CC2pw}
\left\|{\rm T}^{-1}\right\|<1/2, 
\end{equation}
then the sequence   \(\{{\rm x}^{k}\}\)    converges $Q$-linearly  to  \({\rm x}^*\) as follows:
  \begin{equation} \label{eq:lconv3pw}
\|{\rm x}^*-{\rm x}^{k+1}\|\leq \frac{\|{\rm T}^{-1}\|}{1- \|{\rm T}^{-1}\|} \|{\rm x}^*-{\rm x}^{k}\|,  \qquad k=0, 1, \ldots .
\end{equation}
\end{theorem}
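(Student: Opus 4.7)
The plan is to treat the three assertions in order: existence/uniqueness, well-definedness of the iteration, and the Q-linear rate. The first is immediate from Proposition~\ref{pr:uniqqpw}, so I would simply cite it and move on.

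For well-definedness under the assumption $\|{\rm T}^{-1}\|<1$, I need to show that the linear system in \eqref{eq:newtonc2pw} is uniquely solvable at every iterate, i.e.\ that ${\rm V}({\rm x}^k)+{\rm T}$ is nonsingular. The natural move is to factor ${\rm V}({\rm x}^k)+{\rm T}={\rm T}({\rm Id_n}+{\rm T}^{-1}{\rm V}({\rm x}^k))$. By Lemma~\ref{l:dpj} we have $\|{\rm V}({\rm x}^k)\|\le 1$, so $\|{\rm T}^{-1}{\rm V}({\rm x}^k)\|\le\|{\rm T}^{-1}\|<1$, and Banach's Lemma (Lemma~\ref{lem:ban}), applied to ${\rm E}=-{\rm T}^{-1}{\rm V}({\rm x}^k)$, yields invertibility of ${\rm Id_n}+{\rm T}^{-1}{\rm V}({\rm x}^k)$ with
$$
\bigl\|({\rm Id_n}+{\rm T}^{-1}{\rm V}({\rm x}^k))^{-1}\bigr\|\le \frac{1}{1-\|{\rm T}^{-1}\|}.
$$
Hence ${\rm x}^{k+1}$ is uniquely determined and, in addition, $\|({\rm V}({\rm x}^k)+{\rm T})^{-1}\|\le \|{\rm T}^{-1}\|/(1-\|{\rm T}^{-1}\|)$, a bound I will reuse below.

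For the Q-linear rate I would exploit Lemma~\ref{l:ppbx} (which identifies ${\rm V}({\rm x}){\rm x}$ with ${\rm P}_{\cal{K}}({\rm x})$) to rewrite $b={\rm P}_{\cal{K}}({\rm x}^*)+{\rm T}{\rm x}^*$ and the Newton equation $({\rm V}({\rm x}^k)+{\rm T}){\rm x}^{k+1}=b$, and then subtract to obtain
$$
({\rm V}({\rm x}^k)+{\rm T})({\rm x}^{k+1}-{\rm x}^*)={\rm P}_{\cal{K}}({\rm x}^*)-{\rm V}({\rm x}^k){\rm x}^*.
$$
Adding and subtracting ${\rm P}_{\cal{K}}({\rm x}^k)={\rm V}({\rm x}^k){\rm x}^k$ on the right transforms this into
$$
({\rm V}({\rm x}^k)+{\rm T})({\rm x}^{k+1}-{\rm x}^*)={\rm P}_{\cal{K}}({\rm x}^*)-{\rm P}_{\cal{K}}({\rm x}^k)-{\rm V}({\rm x}^k)({\rm x}^*-{\rm x}^k).
$$
Lemma~\ref{l:error} bounds the right-hand side in norm by $\|{\rm x}^*-{\rm x}^k\|$. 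Combining with the Banach bound on $\|({\rm V}({\rm x}^k)+{\rm T})^{-1}\|$ derived above gives
$$
\|{\rm x}^{k+1}-{\rm x}^*\|\le \frac{\|{\rm T}^{-1}\|}{1-\|{\rm T}^{-1}\|}\,\|{\rm x}^k-{\rm x}^*\|,
$$
and the hypothesis $\|{\rm T}^{-1}\|<1/2$ makes the contraction factor strictly less than one, establishing Q-linear convergence from any ${\rm x}^0$.

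The main conceptual obstacle is not any estimate in isolation, but recognizing that the semi-smooth peculiarity of ${\rm P}_{\cal{K}}$ is tamed by the identity ${\rm V}({\rm x}){\rm x}={\rm P}_{\cal{K}}({\rm x})$ from Lemma~\ref{l:ppbx}; this is what lets the residual $b={\rm P}_{\cal{K}}({\rm x}^*)+{\rm T}{\rm x}^*$ be absorbed neatly into the Newton equation so that only the pointwise linearization error of ${\rm P}_{\cal{K}}$ (controlled by Lemma~\ref{l:error}) drives the analysis. Once this substitution is in place, the rest is just Banach's Lemma bookkeeping and the algebraic equivalence $\|{\rm T}^{-1}\|/(1-\|{\rm T}^{-1}\|)<1\Longleftrightarrow\|{\rm T}^{-1}\|<1/2$.
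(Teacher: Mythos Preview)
Your proposal is correct and follows essentially the same route as the paper: Proposition~\ref{pr:uniqqpw} for uniqueness, the factorization ${\rm V}({\rm x})+{\rm T}={\rm T}({\rm Id_n}+{\rm T}^{-1}{\rm V}({\rm x}))$ together with Lemma~\ref{l:dpj} and Banach's Lemma for well-definedness and the bound on $\|({\rm V}({\rm x}^k)+{\rm T})^{-1}\|$, and the identity ${\rm V}({\rm x}){\rm x}={\rm P}_{\cal K}({\rm x})$ plus Lemma~\ref{l:error} to reduce the error recursion to the linearization error of ${\rm P}_{\cal K}$. The only cosmetic difference is that you derive the Banach bound on the inverse already in the well-definedness step and reuse it, whereas the paper postpones that computation until after establishing the inequality~\eqref{eq:conv}.
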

\begin{proof} 
Let  \({\rm x} \in \mathbb{R}^n\).  It follows from Lemma~\ref{l:dpj} that $\|{\rm V}({\rm x})\|\leq 1$ for all ${\rm x} \in \R^n$. Hence, by using
the properties of the norm and by taking into account that  $\left\|{\rm T}^{-1}\right\|<1$,  we conclude that  \(\|{\rm T}^{-1}{\rm V}({\rm x})\|<1
\).  Thus,    Lemma~\ref{lem:ban}  implies that  \(-{\rm T}^{-1}{\rm V}({\rm x}) -{\rm Id_n}\) is nonsingular.  Since \({\rm T}\)  is
nonsingular and 
 \[
  {\rm V}({\rm x})+{\rm T}=-{\rm T}\left[ -{\rm T}^{-1}{\rm V}({\rm x})-{\rm Id_n}\right], \qquad  \quad ~x \in \mathbb{R}^n, 
 \]
 we obtain   that  ${\rm V}({\rm x})+{\rm T}$ is also nonsingular. Therefore, for any starting point \({\rm x}^0 \in \R^{n}\), \eqref{eq:newtonc2pw} implies that the sequence \(\{{\rm x}^{k}\}\) generated by \eqref{eq:newtonc2pw}   is well-defined.

 By using Proposition~\ref{pr:uniqqpw},  we conclude that \eqref{eq:pwls} has a unique solution \({\rm x}^*\in  \R^{n}\).  Since \({\rm x}^*\in  \R^{n}\) is the solution of \eqref{eq:pwls},   
 we have \( [{\rm V}({\rm x}^*)+{\rm T}]{\rm x}^*-b=0\), which together  with the definition of   $\{{\rm x}^{k}\}$ in \eqref{eq:newtonc2pw} and
 \eqref{eq:mppw}, implies 
\begin{align*}
{\rm x}^*-{\rm x}^{k+1}&=-[{\rm V}({\rm x}^{k})+{\rm T}]^{-1}\big[b- [{\rm V}({\rm x}^{k})+{\rm T}]{\rm x}^*\big]\\&= -[{\rm V}({\rm x}^{k})+{\rm T}]^{-1}\big[ [{\rm V}({\rm x}^*)+{\rm T}]{\rm x}^*-b- [{\rm V}({\rm x}^{k})+{\rm T}]{\rm x}^{k}+b- [{\rm V}({\rm x}^{k})+{\rm T}]({\rm x}^*-{\rm x}^{k})\big],  
\end{align*}
 for $k=0, 1, \ldots $. On the other hand, since \( {\rm V}({\rm x}){\rm x}={\rm P}_{\cal{K}}({\rm x}) \) for all \({\rm x}\in \R^n\), after  simple algebraic manipulations we obtain 
$$
[{\rm V}({\rm x}^*)+{\rm T}]{\rm x}^*-b - [{\rm V}({\rm x}^{k})+{\rm T}]{\rm x}^{k}+b- [{\rm V}({\rm x}^{k})+{\rm T}]({\rm x}^*-{\rm x}^{k})= {\rm P}_{\cal{K}}({\rm x}^*) - {\rm P}_{\cal{K}}({\rm x}^{k})-{\rm V}({\rm x}^{k})({\rm x}^*-{\rm x}^{k}),
$$
for $k=0, 1, \ldots $. By combining the above two equalities and by using the properties of the norm,   we obtain
\begin{equation}
\label{eq:new1}
\|{\rm x}^*-{\rm x}^{k+1}\|\leq \left \|[{\rm V}({\rm x}^{k})+{\rm T}]^{-1}\right\|  \left\| {\rm P}_{\cal{K}}({\rm x}^*) - {\rm P}_{\cal{K}}({\rm x}^{k})-{\rm V}({\rm x}^{k})({\rm x}^*-{\rm x}^{k})\right\|,  \qquad k=0, 1, \ldots .
\end{equation}
It follows from Lemma~\ref{l:error} that  $ \left\| {\rm P}_{\cal{K}}({\rm x}^*) - {\rm P}_{\cal{K}}({\rm x}^{k})-{\rm V}({\rm x}^{k})({\rm
x}^*-{\rm x}^{k})\right\| \leq \|{\rm x}^*-{\rm x}^{k}\|$, for \( k=0, 1, \ldots\), and, by combining the last inequality with \eqref{eq:new1}, we
get 
\begin{equation} \label{eq:conv}
\|{\rm x}^*-{\rm x}^{k+1}\|\leq  \left\|[{\rm V}({\rm x}^{k})+{\rm T}]^{-1}\right\|  \|{\rm x}^*-{\rm x}^{k}\|, \qquad k=0, 1, \ldots .
\end{equation}
On the other hand, by using the properties of the norm, after some simple algebraic manipulations,  we get
\[
\left\| [{\rm V}({\rm x}^{k})+{\rm T}]^{-1}\right\|=  \left\| [-{\rm T}^{-1}{\rm V}({\rm x}^{k})-{\rm Id_n}]^{-1} \left[-{\rm T}^{-1}\right]\right\|\leq  \left\| [{\rm T}^{-1}{\rm V}({\rm x}^{k})+{\rm Id_n}]^{-1}\right\| \|{\rm T}^{-1}\|,\quad k=0, 1, \ldots , 
\]
which combined with   Lemma~\ref{lem:ban} and \(\|{\rm T}^{-1}{\rm V}({\rm x}^{k})\|\leq \|{\rm T}^{-1}\|<1 \) implies  
\[
\left\| [{\rm V}({\rm x}^{k})+{\rm T}]^{-1}\right\| \leq \frac{ \|{\rm T}^{-1}\|}{1- \left\|{\rm T}^{-1}\right\|}, \qquad \quad k=0, 1, \ldots .
\]
Thus, the last inequality and \eqref{eq:conv} yield \eqref{eq:lconv3pw}. Note that \eqref{eq:CC2pw} implies  \(\|{\rm T}^{-1}\|/(1- \|{\rm
T}^{-1}\|)<1\). Therefore,   \eqref{eq:lconv3pw} implies that \(\{{\rm x}^{k}\}\) converges Q-linearly,  for any starting point \({\rm x}^0\),
to the 
solution \({\rm x}^*\) of  \eqref{eq:pwls}. Hence, the theorem is proven. 
\end{proof}
\noindent If ${\rm T} $ is a symmetric and positive definite matrix, then stronger results are obtained.
\begin{theorem} \label{th:defpos}
Let  $b \in \R^n$ and  ${\rm T} \in \R^{n\times n}$ be a symmetric and positive definite matrix.   
Then \eqref{eq:pwls} has a unique solution \({\rm x}^*\in \mathbb{R}^n\) and,  for any starting point \({\rm x}^0 \in \R^{n}\),  the   semi-smooth
Newton  sequence  \(\{{\rm x}^{k}\}\)  generated by \eqref{eq:newtonc2pw}   is well-defined. 
Moreover, if \(\left\|{\rm T}^{-1}\right\|<1\), then  \(\{{\rm x}^{k}\}\)    converges $Q$-linearly  to  \({\rm x}^*\) as follows:
$$\|{\rm x}^*-{\rm x}^{k+1}\|\leq \|{\rm T}^{-1}\| \|{\rm x}^*-{\rm x}^{k}\|,  \qquad k=0, 1, \ldots.$$
\end{theorem}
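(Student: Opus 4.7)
The symmetric positive definite structure of \({\rm T}\) combined with the symmetry of every element of \(\partial_B{\rm P}_{\cal K}\) lets us sharpen the three ingredients used in Theorem~\ref{th:mrqpw}: existence, well-definedness of the iterates, and the operator-norm bound that controls the linear rate.

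For existence and uniqueness without any smallness hypothesis on \({\rm T}^{-1}\), I would observe that \(F({\rm x})={\rm P}_{\cal K}({\rm x})+{\rm T}{\rm x}\) is strongly monotone on \(\R^n\). The firm nonexpansiveness of \({\rm P}_{\cal K}\) recorded in Remark~\ref{re:ppo} yields \(\langle {\rm P}_{\cal K}({\rm x})-{\rm P}_{\cal K}({\rm y}),\,{\rm x}-{\rm y}\rangle\ge 0\), while symmetric positive definiteness of \({\rm T}\) gives \(\langle {\rm T}({\rm x}-{\rm y}),\,{\rm x}-{\rm y}\rangle\ge (1/\|{\rm T}^{-1}\|)\|{\rm x}-{\rm y}\|^2\), so summing these two inequalities shows strong monotonicity of \(F\) with modulus \(1/\|{\rm T}^{-1}\|\). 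A continuous strongly monotone operator on \(\R^n\) is a bijection (injectivity is immediate from strong monotonicity; surjectivity follows from continuity together with the coercivity \(\|F({\rm x})\|\ge(1/\|{\rm T}^{-1}\|)\|{\rm x}\|-\|F(0)\|\)), so \(F({\rm x})=b\) has a unique solution \({\rm x}^*\).

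For well-definedness, Lemma~\ref{l:dpj} tells us every \({\rm V}({\rm x})\in\partial_B{\rm P}_{\cal K}({\rm x})\) is symmetric with eigenvalues in \([0,1]\). Since \({\rm T}\) is symmetric, \({\rm V}({\rm x}^k)+{\rm T}\) is symmetric too, and Lemma~\ref{l:wilkinson} gives
\[
\lambda_{\min}({\rm V}({\rm x}^k)+{\rm T})\ge \lambda_{\min}({\rm V}({\rm x}^k))+\lambda_{\min}({\rm T})\ge 1/\|{\rm T}^{-1}\|>0,
\]
so \({\rm V}({\rm x}^k)+{\rm T}\) is symmetric positive definite; in particular it is invertible and \(\|({\rm V}({\rm x}^k)+{\rm T})^{-1}\|\le \|{\rm T}^{-1}\|\). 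Consequently \(\{{\rm x}^k\}\) is well-defined for every starting point.

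For the Q-linear rate, the derivation in Theorem~\ref{th:mrqpw} leading to \eqref{eq:conv} only used the invertibility of \({\rm V}({\rm x}^k)+{\rm T}\), Lemma~\ref{l:ppbx}, and Lemma~\ref{l:error}, so it carries over verbatim and yields \(\|{\rm x}^*-{\rm x}^{k+1}\|\le \|({\rm V}({\rm x}^k)+{\rm T})^{-1}\|\,\|{\rm x}^*-{\rm x}^k\|\). Inserting the spectral bound from the previous paragraph produces the desired estimate \(\|{\rm x}^*-{\rm x}^{k+1}\|\le \|{\rm T}^{-1}\|\,\|{\rm x}^*-{\rm x}^k\|\), and the hypothesis \(\|{\rm T}^{-1}\|<1\) turns this into Q-linear convergence. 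There is no single hard step; the conceptual point is that symmetry replaces the indirect operator-norm bound derived from Banach's Lemma (which cost the factor \(1/(1-\|{\rm T}^{-1}\|)\) in Theorem~\ref{th:mrqpw}) by a direct spectral estimate, eliminating that loss and allowing the weaker hypothesis \(\|{\rm T}^{-1}\|<1\) in place of \(\|{\rm T}^{-1}\|<1/2\).
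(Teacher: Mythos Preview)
Your proposal is correct, and the well-definedness and Q-linear convergence parts match the paper's argument essentially verbatim: both use Lemma~\ref{l:wilkinson} on the symmetric sum ${\rm V}({\rm x}^k)+{\rm T}$ to get $\lambda_{\min}({\rm V}({\rm x}^k)+{\rm T})\ge\lambda_{\min}({\rm T})$, hence $\|({\rm V}({\rm x}^k)+{\rm T})^{-1}\|\le\|{\rm T}^{-1}\|$, and then feed this into the estimate \eqref{eq:conv} inherited from Theorem~\ref{th:mrqpw}.

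The only genuine difference is in the existence/uniqueness step. You invoke strong monotonicity of $F$ and the general principle that a continuous strongly monotone map on $\R^n$ is a bijection. The paper instead stays within the tools already set up in the preliminaries: it uses the Moreau decomposition ${\rm x}={\rm P}_{\cal K}({\rm x})-{\rm P}_{\cal K}(-{\rm x})$ to rewrite \eqref{eq:pwls} as the fixed-point equation ${\rm x}=[{\rm Id_n}+{\rm T}]^{-1}\bigl(b-{\rm P}_{\cal K}(-{\rm x})\bigr)$ and checks directly, via Lemma~\ref{l:wilkinson}, that the right-hand side is a contraction with constant $1/(1+\lambda_{\min}({\rm T}))<1$, then applies Theorem~\ref{fixedpoint}. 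Your route is perhaps more conceptual and makes clear why no smallness hypothesis is needed, but it imports a result (surjectivity of coercive continuous maps) not listed among the paper's preliminaries; the paper's route is more self-contained and reuses the same contraction-mapping template as Proposition~\ref{pr:uniqqpw}.
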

\begin{proof} 
Since ${\rm T}$ is  symmetric and positive definite, it follows from Lemma~\ref{l:wilkinson} that  ${\rm Id_n + T}$ is nonsingular. Thus,
taking into account that  ${\rm x} = {\rm P}_{\cal{K}}({\rm x})-{\rm P}_{\cal{K}}(-{\rm x})$, after some algebraic manipulations, we conclude that 
 \eqref{eq:pwls} is equivalent  to  ${\rm x} = \left[ {\rm Id_n + T}\right]^{-1}\left(b-{\rm P}_{\cal{K}}({-\rm x})\right)$. 
Therefore, equation~\eqref{eq:pwls} has a solution if only if 
$\Phi({\rm x})=\left[ {\rm Id_n + T}\right)]^{-1}\left(b-{\rm P}_{\cal{K}}({-\rm x})\right)$ has a fixed point. On the other hand, it follows from
the definition of $\Phi$ that
 \[
\Phi({\rm x})- \Phi({\rm y})= \left[ {\rm Id_n + T}\right]^{-1} (-{\rm P}_{\cal{K}}(-{\rm x}) + {\rm P}_{\cal{K}}(-{\rm y}) ), \qquad \qquad   ~{\rm x}, {\rm y} \in \mathbb{R}^n.
\]
Since ${\rm T}$ is symmetric and positive definite, it follows from Lemma~\ref{l:wilkinson} that 
$\left\|\left[ {\rm Id_n + T}\right]^{-1}\right\|=\kappa <1,$
where $\kappa = 1/(1+\lambda_{\rm min})$ and $\lambda_{\rm min}>0$ is the minimum eigenvalue of ${\rm T}$.
Now, proceeding as in Proposition~\ref{pr:uniqqpw}, it is possible to conclude that \( \Phi\) is a contraction and it has  
precisely a unique fixed point. Consequently   \eqref{eq:pwls} has  a unique solution.

Lemma~\ref{l:dpj} implies that the eigenvalues of ${\rm V(x)}$ belongs to the interval $[0,1]$, for all ${\rm x}\in\R^n$. 
Hence, the nonsingularity of ${\rm V(x)}+{\rm T}$ follows from Lemma~\ref{l:wilkinson}. As a consequence, the sequence \(\{{\rm x}^{k}\}\) generated
by \eqref{eq:newtonc2pw} is well-defined for any 
starting point.  In order to  prove the $Q$-linear convergence of  \(\{{\rm x}^{k}\}\) to  ${\rm x^*}\in\R^n$, the unique solution of
\eqref{eq:pwls},  we  proceed as in the proof of  Theorem~\ref{th:mrqpw} to obtain 
$$
\|{\rm x}^*-{\rm x}^{k+1}\|\leq  \left\|[{\rm V}({\rm x}^{k})+{\rm T}]^{-1}\right\|  \|{\rm x}^*-{\rm x}^{k}\|, \qquad k=0, 1, \ldots .
$$
Lemma~\ref{l:wilkinson} allows us to conclude  that $\left\|[{\rm V}({\rm x}^{k})+{\rm T}]^{-1}\right\|\leq \|{\rm T}^{-1}\|$. Thus, by combining the latter two 
inequalities we have
$$
\|{\rm x}^*-{\rm x}^{k+1}\|\leq  \|{\rm T}^{-1}\| \|{\rm x}^*-{\rm x}^{k}\|, \qquad k=0, 1, \ldots .
$$
Therefore, as we are under the assumption  \(\left\|{\rm T}^{-1}\right\|<1\), the last inequality implies that \(\{{\rm x}^{k}\}\) converges $Q$-linearly  to  \({\rm x}^*\in \mathbb{R}^n\),  for any starting point \({\rm x}^0\).
\end{proof}

The invertibility  of \({\rm V}({\rm x})+{\rm T}\),  for all \({\rm x}\in \mathbb{R}^n\),  is sufficient for the  well-definedness of the Newton method. However,   the next example shows that  
an additional condition on  ${\rm T}$ must be assumed for convergence, for instance,   \eqref{eq:CC2pw}.

\begin{example}
Consider the  function $F: \mathbb{R}^2 \to \mathbb{R}^2$ defined by  $F({\rm x})={\rm P}_ {\cal{K}}({\rm x})+{\rm T}{\rm x}-b$, where
$$
\emph{T}=\begin{bmatrix}
5  &  1 \\
1 &   0
\end{bmatrix} \qquad  \mbox{and}  \qquad 
\emph{b}=\begin{bmatrix}
 13\\
 3
\end{bmatrix}.
$$
Note that $\emph{T}$ is symmetric and $\|\emph{T}^{-1}\|=5.1926\ldots$. When the considered dimension is 2, ${\rm V}({\rm x})\in \partial_{B} {\rm P}_{\cal{K}}({\rm x})$ is equal to 
$$\begin{bmatrix} 0 & 0 \\0 & 0 \end{bmatrix},  \qquad \begin{bmatrix} 1 & 0 \\0 & 1 \end{bmatrix},  \qquad  \frac{1}{2} \begin{bmatrix} 1 & 1 \\1 & 1 \end{bmatrix}, \qquad \mbox{ or } \qquad \frac{1}{2} \begin{bmatrix} 1 & -1 \\-1 & 1 \end{bmatrix}.$$
Therefore, the matrices \( {\rm V}({\rm x})+\emph{T} \) are invertible,  for all \({\rm x}\in \mathbb{R}^2\). 
Moreover, ${\rm x}^*=[2 \quad 1]^\top$ is a zero of $F$.
By applying Newton method starting at ${\rm x}^0=[0 \quad 1]^\top$,  for finding the zeros of $F$, the generated sequence oscillates between the points
$$
{\rm x}^1=\displaystyle\begin{bmatrix}
4\\
-6
\end{bmatrix}, \qquad 
{\rm x}^2=\displaystyle\begin{bmatrix}
2\\
4 
\end{bmatrix}.
$$
It is useful to mention that, for all $k$, $x_1\neq \pm |{{\rm x}_2}|$. Therefore, ${\rm V}({\rm x}^k)=  {\rm P}^{\prime}_{\cal{K}}({\rm x}^k)$ and this example holds up for all different options in  items {\bf (b)}, {\bf (c)} and {\bf (d)} in {\rm Lemma \ref{l:dpj}}.
\end{example}
\section{Application to the  linear second order cone complementarity problem} \label{sec: defbp}
In this section,  we apply the results of Section~\ref{sec: defscls} to solve \eqref{eq:lclcp} and consequently  to find a solution of
\eqref{eq:napsc2}.  We begin by showing  that   from each solution of  the semi-smooth equation 
\begin{equation} \label{eq:lcalcp}
 [{\rm M}- {\rm Id_n}]{\rm P}_{\cal{K}}({\rm x}) +{\rm x}= -{\rm q}, 
\end{equation}
we obtain  a solution of  the  LSOCCP \eqref{eq:lclcp}:
\begin{proposition}  \label{pr:polscq}
If the vector \({\rm x}^*\)  is a solution of \eqref{eq:lcalcp}, then  \(  {\rm P}_{\cal{K}}({\rm x}^*) \)   is a solution of \eqref{eq:lclcp}.
\end{proposition}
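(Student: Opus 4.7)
The plan is to set $z:={\rm P}_{\cal{K}}({\rm x}^*)$ and verify the three defining conditions of the LSOCCP~\eqref{eq:lclcp} directly. The key observation is that the operator appearing in \eqref{eq:lcalcp} has been designed so that the equation, together with Moreau's decomposition~\eqref{eq;cmdt}, exactly encodes the complementarity structure of ${\cal K}$.

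First I would rearrange \eqref{eq:lcalcp} as
\[
{\rm M}{\rm P}_{\cal{K}}({\rm x}^*)+{\rm q}={\rm P}_{\cal{K}}({\rm x}^*)-{\rm x}^*,
\]
that is, ${\rm M}z+{\rm q}=z-{\rm x}^*$. Next I would invoke Moreau's decomposition~\eqref{eq;cmdt}, which gives ${\rm x}^*={\rm P}_{\cal{K}}({\rm x}^*)-{\rm P}_{\cal{K}}(-{\rm x}^*)=z-{\rm P}_{\cal{K}}(-{\rm x}^*)$, so that
\[
{\rm M}z+{\rm q}=z-{\rm x}^*={\rm P}_{\cal{K}}(-{\rm x}^*).
\]
At this point everything is immediate. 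Since ${\cal K}$ is the range of the projection onto itself, $z={\rm P}_{\cal{K}}({\rm x}^*)\in{\cal K}$ and ${\rm M}z+{\rm q}={\rm P}_{\cal{K}}(-{\rm x}^*)\in{\cal K}$, which takes care of the first two conditions of~\eqref{eq:lclcp}. For the complementarity condition, I would use the second identity in~\eqref{eq;cmdt}, namely $\langle {\rm P}_{\cal{K}}({\rm x}^*),{\rm P}_{\cal{K}}(-{\rm x}^*)\rangle=0$, which yields
\[
\langle {\rm M}z+{\rm q},\,z\rangle=\langle {\rm P}_{\cal{K}}(-{\rm x}^*),\,{\rm P}_{\cal{K}}({\rm x}^*)\rangle=0.
\]

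There is really no obstacle: the argument is a short algebraic manipulation whose only substantive ingredient is Moreau's decomposition theorem, already recorded as~\eqref{eq;cmdt}. The whole point of the reformulation~\eqref{eq:lcalcp} is that the ``nonsmooth'' piece ${\rm P}_{\cal{K}}({\rm x})$ plays the role of the primal variable $z$ while $-{\rm P}_{\cal{K}}(-{\rm x})$ simultaneously plays the role of the slack ${\rm M}z+{\rm q}$, and the self-duality of the second order cone combined with Moreau's identities guarantees both feasibility and orthogonality in a single stroke.
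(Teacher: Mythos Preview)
Your proof is correct and follows essentially the same route as the paper: rearrange \eqref{eq:lcalcp} to ${\rm M}{\rm P}_{\cal K}({\rm x}^*)+{\rm q}={\rm P}_{\cal K}({\rm x}^*)-{\rm x}^*={\rm P}_{\cal K}(-{\rm x}^*)$ via Moreau's decomposition, then read off feasibility and complementarity from \eqref{eq;cmdt}. Aside from your introduction of the shorthand $z$, the arguments are identical.
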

\begin{proof}
It follows from \eqref{eq;cmdt}  that $ {\rm P}_ {\cal{K}}({\rm x}^*)-{\rm x}^*= {\rm P}_{\cal{K}}(-{\rm x}^*)$. Thus,  if \({\rm x}^*\in \R^n\) is a solution of \eqref{eq:lcalcp}, then
\begin{equation} \label{eq:epmq}
 {\rm M} {\rm P}_ {\cal{K}}({\rm x}^*) + {\rm q}= {\rm P}_{\cal{K}}(-{\rm x}^*).
\end{equation}
Since the second equality  in \eqref{eq;cmdt} implies that \(\left\langle   {\rm P}_ {\cal{K}}({\rm x}^*),  {\rm P}_ {\cal{K}}(-{\rm x}^*)
\right\rangle=0\) and since \({\rm P}_{\cal{K}}(-{\rm x}^*) \in {\cal{K}}\), the  equality \eqref{eq:epmq}   implies that 
\begin{equation}
\label{comp-cone}
 {\rm M} {\rm P}_ {\cal{K}}({\rm x}^*) + {\rm q} \in  {\cal{K}}, \qquad \left\langle   {\rm M} {\rm P}_ {\cal{K}}({\rm x}^*) + {\rm q}, ~  {\rm P}_ {\cal{K}}({\rm x}^*) \right\rangle =  0.
\end{equation}
Combining this with  \({\rm P}_ {\cal{K}}({\rm x}^*) \in{\cal{K}}$,   we conclude that  \({\rm P}_ {\cal{K}}({\rm x}^*)\)  is a solution of \eqref{eq:lclcp}  as claimed.  
\end{proof} 
\noindent The    {\it  semi-smooth Newton method  of starting point   \({\rm x}^{0}\in \mathbb{R}^n\) for solving \eqref{eq:lcalcp}},  is  given by 
 \begin{equation} \label{eq:newtonc2}
	 \left [ \left[ {\rm M} -{\rm Id_n}\right]{\rm V}({\rm x}^{k}) +{\rm Id_n}\right ]{\rm x}^{k+1}=-{\rm q},  \qquad   {\rm V}({\rm x}^k)\in \partial_{B} {\rm P}_{\cal{K}}({\rm x}^k),  \qquad k=0,1,\ldots . 
\end{equation}
\begin{remark} \label{eq:eqv}
 If    $\emph{M}-{\rm Id_n} $ is nonsingular, then letting  \({\rm T}=[\emph{M}-{\rm Id_n}]^{-1}\) and \(b=- {\rm T}{\rm q}\),  equation \eqref{eq:pwls}  becomes  \eqref{eq:lcalcp}.  As a consequence,   \eqref{eq:newtonc2pw}  turns into \eqref{eq:newtonc2}. Indeed, 
  \[
 {\rm x}^{k+1}= \left[ {\rm V}({\rm x}^{k}) +{\rm T}\right ]^{-1} b=\left[ \left[ {\rm M} -{\rm Id_n}\right]{\rm V}({\rm x}^{k}) +{\rm Id_n}\right ]^{-1}(-{\rm q}), \qquad \quad    {\rm V}({\rm x}^k)\in \partial_{B} {\rm P}_{\cal{K}}({\rm x}^k), 
\]
for $k=0,1,\ldots$, which is equivalent to  the  semi-smooth Newton method defined in   \eqref{eq:newtonc2}.
\end{remark}
\noindent In the following two results we present   sufficient existence and uniqueness conditions for \eqref{eq:lcalcp}.
\begin{proposition}\label{pr:uniqqM}  
If \(\left\| \emph{M} -{\rm Id_n}\right\|<1 \)  then  \eqref{eq:lcalcp}  has a unique solution,   for any ${\rm q}\in \R^n$.
\end{proposition}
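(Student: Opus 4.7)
The plan is to mimic the proof of Proposition~\ref{pr:uniqqpw}: recast equation \eqref{eq:lcalcp} as a fixed-point problem, then verify the associated operator is a contraction in order to invoke the contraction mapping principle (Theorem~\ref{fixedpoint}).

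More precisely, I would observe that ${\rm x}^*$ solves \eqref{eq:lcalcp} if and only if ${\rm x}^*$ is a fixed point of the mapping
\[
\Psi({\rm x}) := -{\rm q} - [{\rm M}-{\rm Id_n}]{\rm P}_{\cal{K}}({\rm x}), \qquad {\rm x}\in\R^n.
\]
From this definition one computes
\[
\Psi({\rm x}) - \Psi({\rm y}) = -[{\rm M}-{\rm Id_n}]\bigl({\rm P}_{\cal{K}}({\rm x}) - {\rm P}_{\cal{K}}({\rm y})\bigr),\qquad {\rm x},{\rm y}\in\R^n.
\]
Taking norms and invoking the nonexpansiveness of the projection $\mathrm{P}_{\cal{K}}$ recalled in Remark~\ref{re:ppo}, together with the submultiplicativity of the spectral norm, yields
\[
\|\Psi({\rm x}) - \Psi({\rm y})\| \leq \|{\rm M}-{\rm Id_n}\| \,\|{\rm P}_{\cal{K}}({\rm x}) - {\rm P}_{\cal{K}}({\rm y})\| \leq \|{\rm M}-{\rm Id_n}\|\,\|{\rm x}-{\rm y}\|.
\]
Since $\kappa := \|{\rm M}-{\rm Id_n}\| < 1$ by hypothesis, $\Psi$ is a contraction on $\R^n$. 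Applying Theorem~\ref{fixedpoint} yields a unique fixed point $\bar{\rm x}\in\R^n$ of $\Psi$, which by construction is the unique solution of \eqref{eq:lcalcp}.

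There is no real obstacle: this proposition is structurally identical to Proposition~\ref{pr:uniqqpw}, with the roles of ${\rm T}^{-1}$ and ${\rm M}-{\rm Id_n}$ interchanged. The only point to mention carefully is that, unlike in Proposition~\ref{pr:uniqqpw}, here we do not need to invert any matrix to set up the fixed-point equation, because the variable ${\rm x}$ appears linearly (with coefficient ${\rm Id_n}$) outside the projection term in \eqref{eq:lcalcp}. Thus no nonsingularity assumption on ${\rm M}-{\rm Id_n}$ is required for the argument to go through.
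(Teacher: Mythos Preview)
Your proposal is correct and essentially identical to the paper's own proof: the paper defines the same fixed-point map (calling it $\Phi$ instead of $\Psi$), shows it is a contraction via Remark~\ref{re:ppo} and the hypothesis $\|{\rm M}-{\rm Id_n}\|<1$, and invokes Theorem~\ref{fixedpoint}. Your closing remark about not needing any nonsingularity assumption on ${\rm M}-{\rm Id_n}$ is a nice clarification that the paper does not make explicit.
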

\begin{proof} 
First note that,   \eqref{eq:lcalcp} has a unique  solution if, and only if, 
$\Phi({\rm x})= -\left[ {\rm M} - {\rm Id_n} \right]{\rm P}_{\cal{K}}({\rm x}) - {\rm q}$ has a unique fixed point. The definition of $\Phi$  implies  that
 \[
\Phi({\rm x})- \Phi({\rm y})=   \left[ {\rm M} - {\rm Id_n} \right] \left({\rm P}_{\cal{K}}({\rm y}) - {\rm P}_{\cal{K}}({\rm x}) \right), \qquad \qquad   ~{\rm x}, {\rm y} \in \mathbb{R}^n.
\]
Hence, from  Remark~\ref{re:ppo} we obtain  that $ \| \Phi({\rm x})- \Phi({\rm y})\| \leq \left\|{\rm M} - {\rm Id_n}\right\| \|{\rm x}-{\rm
y}\|$.  Since   $\left\|{\rm M} - {\rm Id_n}\right\| <1$,   $\Phi$   is a contraction.   Therefore,   from Theorem~\ref{fixedpoint} we
conclude that  $\Phi$   has a unique fixed point,   for any ${\rm q}\in \R^n$, which implies that \eqref{eq:lcalcp} has a unique  solution. 
\end{proof}
\begin{proposition}\label{pr:uniqqMI}  
If   $\emph{M}$ is nonsingular and  \(\left\| \emph{M}^{-1} -{\rm Id_n}\right\|<1 \),   then  \eqref{eq:lcalcp}  has a unique solution,   for any ${\rm q}\in \R^n$.
\end{proposition}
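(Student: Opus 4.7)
The approach mirrors the proof of Proposition~\ref{pr:uniqqM}, but because our hypothesis controls $\|M^{-1}-\mathrm{Id}_n\|$ rather than $\|M-\mathrm{Id}_n\|$, we need to recast \eqref{eq:lcalcp} as a different fixed-point equation. The key device will be Moreau's decomposition \eqref{eq;cmdt}, which allows us to trade the variable $x$ for the quantity $P_{\mathcal{K}}(-x)$ and thereby bring an $M^{-1}$ factor into the contraction constant.

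First I would substitute $x = P_{\mathcal{K}}(x) - P_{\mathcal{K}}(-x)$ into \eqref{eq:lcalcp}, obtaining $M P_{\mathcal{K}}(x) - P_{\mathcal{K}}(-x) = -q$. Next, since $M$ is nonsingular, multiplying by $M^{-1}$ and applying the Moreau identity once more on the left-hand side gives
\begin{equation*}
x = [M^{-1} - \mathrm{Id}_n] P_{\mathcal{K}}(-x) - M^{-1} q.
\end{equation*}
Each of these manipulations is reversible, so \eqref{eq:lcalcp} is equivalent to the fixed-point equation $x = \Phi(x)$ with
\begin{equation*}
\Phi(x) := [M^{-1} - \mathrm{Id}_n]\, P_{\mathcal{K}}(-x) - M^{-1} q.
\end{equation*}

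Then I would estimate, for any $x,y \in \mathbb{R}^n$,
\begin{equation*}
\|\Phi(x) - \Phi(y)\| \leq \|M^{-1} - \mathrm{Id}_n\|\, \|P_{\mathcal{K}}(-x) - P_{\mathcal{K}}(-y)\| \leq \|M^{-1} - \mathrm{Id}_n\|\, \|x - y\|,
\end{equation*}
where the second inequality is the nonexpansiveness of the projection recalled in Remark~\ref{re:ppo}. Under the standing hypothesis $\|M^{-1} - \mathrm{Id}_n\| < 1$, the map $\Phi$ is therefore a contraction on $\mathbb{R}^n$.

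Finally, Theorem~\ref{fixedpoint} supplies a unique fixed point of $\Phi$, which by the equivalence above is the unique solution of \eqref{eq:lcalcp}. The only step that requires any care is the reformulation, since one must be sure that the substitution using \eqref{eq;cmdt} is genuinely invertible; but this is immediate from the nonsingularity of $M$, so I do not anticipate a real obstacle — the bulk of the work is just the algebraic passage from \eqref{eq:lcalcp} to $x = \Phi(x)$, after which the contraction mapping principle finishes the argument in a single line.
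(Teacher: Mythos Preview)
Your proof is correct and is essentially the same as the paper's: both use Moreau's decomposition to rewrite \eqref{eq:lcalcp} as the equivalent fixed-point equation $x=[M^{-1}-\mathrm{Id}_n]P_{\cal K}(-x)-M^{-1}q$, then apply nonexpansiveness of the projection and the contraction mapping principle. The only cosmetic difference is that the paper names the map $\Theta$ rather than $\Phi$ and records the intermediate equivalent equation \eqref{eq-m-1M} explicitly.
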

\begin{proof} 
Since ${\rm M}$ is nonsingular, by taking into account the first equality in \eqref{eq;cmdt}, after some algebraic manipulations we can conclude   that  \eqref{eq:lcalcp} is equivalent to 
\begin{equation} \label{eq-m-1M}
\left[{\rm M}^{-1} -{\rm Id_n}\right]{\rm P}_{\cal{K}}({\rm -x})-{\rm x}={\rm M}^{-1}{\rm q}.
\end{equation}
Define the auxiliary function $\Theta({\rm x})= \left[{\rm M}^{-1} -{\rm Id_n}\right]{\rm P}_{\cal{K}}(-{\rm x}) - {\rm M}^{-1}{\rm q}$.  Note
that  \eqref{eq-m-1M} has a unique solution if, and only if,  $\Theta$ has a unique fixed point. On the other hand, the definition of $\Theta$  implies  
\[
\Theta({\rm x})- \Theta({\rm y})=   \left[{\rm M}^{-1} -{\rm Id_n}\right]\left({\rm P}_{\cal{K}}(-{\rm x}) - {\rm P}_{\cal{K}}(-{\rm y}) \right), \qquad \qquad   ~{\rm x}, {\rm y} \in \mathbb{R}^n.
\]
It follows from Remark~\ref{re:ppo} that  $ \| \Theta({\rm x})- \Theta({\rm y}) \| \leq \left\|{\rm M}^{-1} -{\rm Id_n}\right\| \|{\rm x}-{\rm y}\|$ and,  due to   $\left\|{\rm M}^{-1} -{\rm Id_n}\right\| <1$,  we conclude that  $\Theta$   is a contraction. Therefore,   Theorem~\ref{fixedpoint}   implies that $\Theta$   has a unique fixed point,   for any ${\rm q}\in \R^n$. Consequently,  \eqref{eq:lcalcp}  has a unique solution, for any ${\rm q}\in \R^n$.
\end{proof}
\begin{remark} \label{Rpr:uniqqM}  
Note that,  there exist symmetric matrices for which neither $\left\|\emph{\rm M} -{\rm Id_n}\right\|<1$, nor $\left\|\emph{\rm M}^{-1} -{\rm
Id_n}\right\|<1$ are satisfied. For example, such a matrix is 
\begin{equation}\label{exemplo-2}
{\rm M}=\left[\begin{matrix}
1/3 &0\\0&3
\end{matrix}\right].
\end{equation}	
\end{remark}	
In the next theorem a convergence result  for the semi-smooth Newton  sequence  \(\{{\rm x}^{k}\}\),  generated by \eqref{eq:newtonc2},  is presented.
\begin{theorem}\label{theorem-4}
Let  $q \in \R^n$ and  ${\rm M} \in \R^{n\times n}$ be a symmetric matrix. Assume that $\emph{M}-{\rm Id_n} $ is   nonsingular and  \(\left\| \emph{M} -{\rm Id_n}\right\|<1\). Then,   \eqref{eq:lcalcp}   has a unique solution \({\rm x}^*\in \mathbb{R}^n\) and,  for any starting point \({\rm x}^0 \in \R^{n}\),  the     sequence  \(\{{\rm x}^{k}\}\)  generated by \eqref{eq:newtonc2}   is well-defined. Additionally,  if
$\left\| \emph{M} -{\rm Id_n}\right\|<1/2$, 
then  \(\{{\rm x}^{k}\}\)    converges $Q$-linearly  to  \({\rm x}^*\in \mathbb{R}^n\),  the unique solution  of  \eqref{eq:pwls}, as follows:
$$\|{\rm x}^*-{\rm x}^{k+1}\|\leq \frac{\left\| \emph{M} -{\rm Id_n}\right\|}{1- \left\| \emph{M} -{\rm Id_n}\right\|} \|{\rm x}^*-{\rm x}^{k}\|,  \qquad k=0, 1, \ldots .$$
Moreover,  \( {\rm P}_ {\cal{K}}({\rm x}^*)\)   is a solution of \eqref{th:defpos}.
\end{theorem}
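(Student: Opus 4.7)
The plan is to reduce the theorem to Theorem~\ref{th:mrqpw} via the change of variables in Remark~\ref{eq:eqv}. Since ${\rm M}-{\rm Id_n}$ is assumed nonsingular, I set ${\rm T}:=[{\rm M}-{\rm Id_n}]^{-1}$ and $b:=-{\rm T}{\rm q}$, so that \eqref{eq:lcalcp} becomes exactly \eqref{eq:pwls} and the iteration \eqref{eq:newtonc2} coincides with \eqref{eq:newtonc2pw}. Under this identification one has $\|{\rm T}^{-1}\|=\|{\rm M}-{\rm Id_n}\|$, so the hypothesis $\|{\rm M}-{\rm Id_n}\|<1$ translates into $\|{\rm T}^{-1}\|<1$ and the strengthened hypothesis $\|{\rm M}-{\rm Id_n}\|<1/2$ is precisely condition \eqref{eq:CC2pw}.

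With these identifications in place, Theorem~\ref{th:mrqpw} applied under $\|{\rm T}^{-1}\|<1$ immediately yields existence and uniqueness of a solution ${\rm x}^*$ of \eqref{eq:pwls}, equivalently of \eqref{eq:lcalcp}, and the well-definedness of the Newton sequence $\{{\rm x}^k\}$ for any starting point; existence and uniqueness can alternatively be obtained directly from Proposition~\ref{pr:uniqqM}. Strengthening to $\|{\rm M}-{\rm Id_n}\|<1/2$ lets me invoke the $Q$-linear conclusion of Theorem~\ref{th:mrqpw}, and substituting $\|{\rm T}^{-1}\|=\|{\rm M}-{\rm Id_n}\|$ into the estimate \eqref{eq:lconv3pw} produces the displayed contraction bound.

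For the last assertion, since ${\rm x}^*$ solves \eqref{eq:lcalcp}, Proposition~\ref{pr:polscq} directly gives that ${\rm P}_{\cal{K}}({\rm x}^*)$ is a solution of the LSOCCP \eqref{eq:lclcp} (the reference to ``\eqref{th:defpos}'' in the statement appears to be a typographical slip for \eqref{eq:lclcp}).

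The argument is essentially an assembly of earlier results and I do not foresee any real obstacle; the only point requiring care is that Remark~\ref{eq:eqv} is applicable only when ${\rm M}-{\rm Id_n}$ is invertible, which is exactly why that hypothesis is imposed. I also observe that the symmetry of ${\rm M}$ is nowhere used in this line of argument, which suggests the assumption is either vestigial or intended for an implicit link to the quadratic programming problem \eqref{eq:napsc2}.
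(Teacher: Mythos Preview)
Your proof is correct and follows essentially the same route as the paper, which simply states that the result follows by combining Proposition~\ref{pr:uniqqM}, Remark~\ref{eq:eqv}, Theorem~\ref{th:mrqpw}, and Proposition~\ref{pr:polscq}. Your side remarks are also on point: the label ``\eqref{th:defpos}'' in the statement is indeed a slip for \eqref{eq:lclcp}, and the symmetry of ${\rm M}$ plays no role in this argument.
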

\begin{proof} 
The proof follows by combining    Proposition~\ref{pr:uniqqM}, Remark~\ref{eq:eqv}, Theorem~\ref{th:mrqpw}, and Proposition~\ref{pr:polscq}.
\end{proof}
If we assume \({\rm M}\) is a symmetric and positive definite in Theorem  \ref{theorem-4} then stronger results are obtained.    We begin showing that  the semi-smooth Newton method  in \eqref{eq:newtonc2} is always well-defined. 
\begin{lemma}\label{nonsingGC}
If  \({\rm M}\) is symmetric and positive definite, then the following    matrix is nonsingular 
\begin{equation} \label{eq:mnm}
\left[ \emph{M} -{\rm Id_n}\right]{\rm V}({\rm x}) +{\rm Id_n}, \qquad  \quad ~{\rm x} \in \mathbb{R}^n. 
\end{equation}
 As a consequence,  the  semi-smooth Newton sequence \(\{{\rm x}^{k}\}\) generated by \eqref{eq:newtonc2}    is well-defined,  for any starting point \({\rm x}^{0} \in \R^{n}\).
\end{lemma}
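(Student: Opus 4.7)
The plan rests on the spectral structure guaranteed by Lemma~\ref{l:dpj}: a direct inspection of the four cases (a)--(d) of that lemma shows that every matrix ${\rm V}({\rm x})\in\partial_{B}{\rm P}_{\cal K}({\rm x})$ is symmetric, and its eigenvalues lie in $[0,1]$. With this in hand, I would first rewrite \eqref{eq:mnm} in the additive form
\begin{equation*}
[{\rm M}-{\rm Id_n}]\,{\rm V}({\rm x}) + {\rm Id_n} \;=\; {\rm M}\,{\rm V}({\rm x}) + ({\rm Id_n}-{\rm V}({\rm x})),
\end{equation*}
which exhibits the two symmetric positive semidefinite matrices ${\rm V}({\rm x})$ and ${\rm Id_n}-{\rm V}({\rm x})$ as the natural building blocks.

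The heart of the argument is a proof by contradiction. Suppose some ${\rm y}\ne 0$ lies in the kernel, so that ${\rm M}\,{\rm V}({\rm x})\,{\rm y}=({\rm V}({\rm x})-{\rm Id_n})\,{\rm y}$. I would test this identity against the vector ${\rm V}({\rm x})\,{\rm y}$, producing
\begin{equation*}
\langle {\rm M}\,{\rm V}({\rm x})\,{\rm y},\;{\rm V}({\rm x})\,{\rm y}\rangle \;=\; \|{\rm V}({\rm x})\,{\rm y}\|^{2} \;-\; \langle {\rm V}({\rm x})\,{\rm y},\,{\rm y}\rangle,
\end{equation*}
using the symmetry of ${\rm V}({\rm x})$. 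Diagonalizing ${\rm V}({\rm x})$ in an orthonormal eigenbasis with eigenvalues $\lambda_{i}\in[0,1]$ and coordinates $z_{i}$ of ${\rm y}$, the right-hand side becomes $\sum_{i}\lambda_{i}(\lambda_{i}-1)\,z_{i}^{2}\le 0$, since $\lambda_{i}(\lambda_{i}-1)\le 0$ whenever $\lambda_{i}\in[0,1]$. The left-hand side is $\ge 0$ by positive semidefiniteness of ${\rm M}$ and, by \emph{positive definiteness}, vanishes only when ${\rm V}({\rm x})\,{\rm y}=0$. Hence ${\rm V}({\rm x})\,{\rm y}=0$; substituting back into the kernel relation gives $({\rm Id_n}-{\rm V}({\rm x}))\,{\rm y}=0$, so ${\rm y}={\rm V}({\rm x})\,{\rm y}=0$, a contradiction. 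The well-definedness of $\{{\rm x}^{k}\}$ is then immediate, since for every ${\rm x}^{k}$ the recurrence \eqref{eq:newtonc2} prescribes ${\rm x}^{k+1}$ uniquely as the solution of a nonsingular linear system.

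The main subtlety, and the step that requires a bit of insight rather than routine manipulation, is the choice of test vector. Pairing the kernel identity with ${\rm V}({\rm x})\,{\rm y}$ (rather than with ${\rm y}$) is what simultaneously turns the right-hand side into the quadratic form $\sum_{i}\lambda_{i}(\lambda_{i}-1)\,z_{i}^{2}$, whose sign is controlled by the containment of the spectrum of ${\rm V}({\rm x})$ in $[0,1]$, and lets the positive definiteness of ${\rm M}$ bite on the left-hand side. Everything else in the proof is routine.
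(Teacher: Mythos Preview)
Your proof is correct and follows essentially the same route as the paper: both argue by contradiction, rewrite the kernel condition as ${\rm M}{\rm V}{\rm u}=({\rm V}-{\rm Id_n}){\rm u}$, pair it with ${\rm V}{\rm u}$, and use that ${\rm V}^{2}-{\rm V}$ is negative semidefinite (since the eigenvalues of ${\rm V}$ lie in $[0,1]$) to force ${\rm V}{\rm u}=0$, whence ${\rm u}=0$. The only cosmetic difference is that the paper introduces a Cholesky factorization ${\rm M}={\rm L}{\rm L}^{\top}$ and writes $\langle {\rm M}{\rm V}{\rm u},{\rm V}{\rm u}\rangle=\|{\rm L}^{\top}{\rm V}{\rm u}\|^{2}$, whereas you invoke the positive definiteness of ${\rm M}$ directly; your version is marginally cleaner but the underlying idea is identical.
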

\begin{proof}
To simplify the notations let \( {\rm V}={\rm V}({\rm x})\).  Let us suppose, by contradiction, that the matrix in \eqref{eq:mnm} is singular. Thus  there exists \({\rm u}\in \mathbb{R}^n\) such that 
\[
\left(\left[ {\rm M}-{\rm Id_n}\right]{\rm V}+{\rm Id_n}\right){\rm u}= 0,  \qquad  {\rm u} \neq 0.
\]
It is straightforward to see that the last equality is equivalent to the following one
\begin{equation} \label{eq:nsq}
{\rm M}{\rm V}{\rm u} = \left[{\rm V}- {\rm Id_n}\right]{\rm u},  \qquad  {\rm u} \neq 0.
\end{equation}
Since \({\rm M}\) is symmetric and positive definite,  there exists  a  nonsingular matrix \({\rm L }\in \R^{n\times n}\) such that \({\rm M}={\rm L }{\rm L }^\top\). Taking into account  that    \({\rm M}={\rm L }{\rm L }^\top\) and that Lemma~\ref{l:dpj} implies ${\rm V}={\rm V}^\top $ and  $ \|{\rm V}\|\leq 1$, the  equality in  \eqref{eq:nsq}  easily implies that 
\[
\left\| {\rm L }^\top {\rm V}{\rm u}\right\|^2 = \left\langle {\rm V} {\rm M} {\rm V}{\rm u},  {\rm u} \right\rangle=   \left\langle   ({\rm V}^2-{\rm V}){\rm u}, {\rm u}\right\rangle  \leq 0. 
\]
Thus,   \( {\rm L }^\top {\rm V}{\rm u} = 0\).  Because \({\rm M}={\rm L }{\rm L }^\top\) and  \( {\rm L }^\top {\rm V}{\rm u} = 0\),  equality  \eqref{eq:nsq} implies  that \(({\rm V} -{\rm Id_n}){\rm u} = 0\), or equivalently, \({\rm V}{\rm u}={\rm u}\). Hence, 
\[
 {\rm L }^\top {\rm u} =  {\rm L }^\top {\rm V}{\rm u} = 0,   \qquad  {\rm u} \neq 0,
\]
which contradicts the nonsingularity of ${\rm L }$. Therefore, the matrix in \eqref{eq:mnm} is nonsingular for all \({\rm x}\in \mathbb{R}^n\)  and the first part of the lemma is proven.
	
To prove 	the second part of the lemma, combine  the formal definition of  \(\{{\rm x}^k\}\) in \eqref{eq:newtonc2} and the first part of this lemma.
\end{proof}
\begin{theorem} \label{th:defposM}
Let  ${\rm q} \in \R^n$ and  ${\rm M} \in \R^{n\times n}$ be a symmetric and positive definite matrix.  Then,  for any starting point \({\rm x}^0 \in \R^{n}\),  the   semi-smooth Newton  sequence  
\(\{{\rm x}^{k}\}\)  generated by \eqref{eq:newtonc2}   is well-defined.  Additionally,  if  $\emph{M}-{\rm Id_n} $ is  positive definite and \(\left\| \emph{M} -{\rm Id_n}\right\|<1\),
then \eqref{eq:lcalcp} has a unique solution  \({\rm x}^*\in \mathbb{R}^n\) and  \(\{{\rm x}^{k}\}\)    converges $Q$-linearly  to  \({\rm x}^*\) as follows 
$$
\|{\rm x}^*-{\rm x}^{k+1}\|\leq \left\| \emph{M} -{\rm Id_n}\right\| \|{\rm x}^*-{\rm x}^{k}\|,  \qquad k=0, 1, \ldots.
$$
Moreover,  \( {\rm P}_ {\cal{K}}({\rm x}^*)\)   is a solution of \eqref{th:defpos}.
\end{theorem}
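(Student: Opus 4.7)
The plan is to assemble the statement by chaining together four results already proved in the paper, without any genuinely new argument being required. First, the well-definedness of $\{{\rm x}^k\}$ for every starting point is immediate from Lemma~\ref{nonsingGC}: the sole hypothesis there is that ${\rm M}$ is symmetric and positive definite, which is exactly what we are assuming for this part of the theorem. In particular, the matrix $[{\rm M}-{\rm Id_n}]{\rm V}({\rm x}^k)+{\rm Id_n}$ is invertible at every iterate, so~\eqref{eq:newtonc2} determines ${\rm x}^{k+1}$ uniquely from ${\rm x}^k$.

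For the rest, the key idea is to transport everything to the setting of~\eqref{eq:pwls} by invoking Remark~\ref{eq:eqv}. Under the additional assumption that ${\rm M}-{\rm Id_n}$ is positive definite (in particular nonsingular), the remark applies with ${\rm T}:=[{\rm M}-{\rm Id_n}]^{-1}$ and $b:=-{\rm T}{\rm q}$, and identifies \eqref{eq:lcalcp} with~\eqref{eq:pwls} and the iteration~\eqref{eq:newtonc2} with~\eqref{eq:newtonc2pw}. Since ${\rm M}-{\rm Id_n}$ is symmetric and positive definite, so is its inverse ${\rm T}$; moreover ${\rm T}^{-1}={\rm M}-{\rm Id_n}$ literally, so $\|{\rm T}^{-1}\|=\|{\rm M}-{\rm Id_n}\|<1$ by hypothesis. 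Hence all assumptions of Theorem~\ref{th:defpos} are met.

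Theorem~\ref{th:defpos} therefore provides a unique solution ${\rm x}^*\in\R^n$ of~\eqref{eq:pwls}, and hence of~\eqref{eq:lcalcp} via the equivalence noted above, together with the Q-linear estimate
\[
\|{\rm x}^*-{\rm x}^{k+1}\|\leq \|{\rm T}^{-1}\|\,\|{\rm x}^*-{\rm x}^{k}\|=\|{\rm M}-{\rm Id_n}\|\,\|{\rm x}^*-{\rm x}^{k}\|,
\]
which is exactly the inequality claimed. Finally, the fact that ${\rm P}_{\cal K}({\rm x}^*)$ solves the LSOCCP is the content of Proposition~\ref{pr:polscq} applied to this ${\rm x}^*$.

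No step in this plan presents a serious obstacle; the only bookkeeping worth flagging is the norm identity $\|{\rm T}^{-1}\|=\|{\rm M}-{\rm Id_n}\|$, which holds on the nose because ${\rm T}^{-1}$ equals ${\rm M}-{\rm Id_n}$ by construction and therefore requires no spectral gymnastics. One might also wish to record, as a sanity check parallel to Theorem~\ref{th:defpos}, that the positive definiteness of ${\rm T}$ is what allows the rate $\|{\rm T}^{-1}\|$ rather than the weaker rate $\|{\rm T}^{-1}\|/(1-\|{\rm T}^{-1}\|)$ from Theorem~\ref{th:mrqpw}; this is precisely why the symmetric positive definite hypothesis on ${\rm M}-{\rm Id_n}$ is strictly stronger than what is needed in Theorem~\ref{theorem-4}.
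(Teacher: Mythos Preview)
Your proposal is correct and follows essentially the same approach as the paper: the paper's proof is the one-line statement that the result follows by combining Proposition~\ref{pr:uniqqM}, Lemma~\ref{nonsingGC}, Remark~\ref{eq:eqv}, Theorem~\ref{th:defpos}, and Proposition~\ref{pr:polscq}, and you have reproduced precisely this chain with the details spelled out. The only nominal difference is that you do not invoke Proposition~\ref{pr:uniqqM} separately, but this is harmless since uniqueness is already delivered by Theorem~\ref{th:defpos} once ${\rm T}=[{\rm M}-{\rm Id_n}]^{-1}$ is symmetric positive definite.
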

\begin{proof} 
The proof follows by combining   Proposition~\ref{pr:uniqqM},  Lemma~\ref{nonsingGC},     Remark~\ref{eq:eqv},  Theorem~\ref{th:defpos} and Proposition~\ref{pr:polscq}.
\end{proof}
From now on,  we will consider a parametric version of equation \eqref{eq:lcalcp}, which will be specially useful to study the  second order cone linear  complementarity problem, whenever  ${\rm M}$ is positive definite. Let  $q \in \R^n$ and  ${\rm M} \in \R^{n\times n}$ be a symmetric matrix defining  equation in  \eqref{eq:lcalcp}. Let  $\beta >0$ and define ${\rm M_\beta}:=\beta {\rm M}$ ,  $ {\rm  q_\beta }:= {\rm \beta q }$ and  consider the   {\it parametric auxiliary equation} 
\begin{equation} \label{eq:lcalcp*}
[{\rm M_\beta}- {\rm Id_n}]{\rm P}_{\cal{K}}({\rm y}) +{\rm y}= -{\rm  q_\beta }.
\end{equation} 
Note that  the last equation has the same algebraic  structure  of equation in  \eqref{eq:lcalcp}.  In the next  remark we point out some interesting properties of  \eqref{eq:lcalcp*}, which are analogous properties of the equation in \eqref{eq:lcalcp}.
\begin{remark} \label{r:eqeq}
It is worth mentioning that the result of {\rm Proposition \ref{pr:polscq}} remains true if equation \eqref{eq:lcalcp} is replaced by
\eqref{eq:lcalcp*}.  In other words, if ${\rm y}^*$ is solution of \eqref{eq:lcalcp*}, then ${\rm P}_{\cal{K}}({\rm y}^*)$ is a solution of
\eqref{eq:lclcp}. The proof of this statement follows from the same idea as in the proof of {\rm Proposition \ref{pr:polscq}}, by noting that,   due to
$\cal{K}$  being a cone,  the equation  \eqref{comp-cone} still holds for  ${\rm M}={\rm M_\beta}$ and $ {\rm q}={\rm  q_\beta }$.    Moreover,  if
\(\left\| {\rm M_\beta} -{\rm Id_n}\right\|<1 \) or \(\left\| {\rm M_\beta}^{-1} -{\rm Id_n}\right\|<1 \), then equation in  \eqref{eq:lcalcp*}
has also a unique solution.  Indeed, the result follows by applying  {\rm Propositions   \ref{pr:uniqqM}} and {\rm \ref{pr:uniqqMI}} with    ${\rm
M}={\rm M_\beta}$ and $ {\rm q}={\rm  q_\beta }$.
\end{remark}
Now   we are going to show the advantage to choose an appropriate parameter $\beta>0$ in \eqref{eq:lcalcp*} instead of taking $\beta=1$ as in equation \eqref{eq:lcalcp}.
We will begin with the following remark.
\begin{remark} \label{r:wdef}
Additionally,  if  ${\rm M}$  is positive definite and  $0\leq \beta<2/\|{\rm M}\|$, which includes the simple example of {\rm Remark
\ref{Rpr:uniqqM}}, then equation \eqref{eq:lcalcp*} always has a unique solution. Actually,  if  ${\rm M}$  is positive definite and $0<
\beta<2/\|{\rm M}\|$, then we have
$$
\left\|{\rm M_\beta} -{\rm Id_n}\right\|=\left\|{\rm \beta M} -{\rm Id_n}\right\|<1,
$$
and, by applying  {\rm Proposition \ref{pr:uniqqM}}  with    $M={\rm M_\beta}$ and $ {\rm q}={\rm  q_\beta }$,  we conclude that
\eqref{eq:lcalcp*} has a unique solution.  In particular, note that, by  replacing the matrix in \eqref{exemplo-2}, which has the norm equal to
$3$, with the matrix 
\begin{equation*}\label{exemplo-2.beta}
{\rm M_\beta}:=\left[\begin{matrix}
\beta/3 &0\\0&3\beta
\end{matrix}\right], \quad \qquad 0< \beta<1/3,
\end{equation*}
we have $\left\|{\rm M_\beta} -{\rm Id_n}\right\|<1$ and in this case  we conclude that \eqref{eq:lcalcp*} has a unique solution.
\end{remark}
\noindent The   semi-smooth Newton method  for solving \eqref{eq:lcalcp*}, with starting point   \({\rm y}^{0}\in \mathbb{R}^n\),  is  given by 
\begin{equation} \label{eq:newtonc2*}
\left [ \left[ {\rm M_\beta} -{\rm Id_n}\right]{\rm V}({\rm y}^{k}) +{\rm Id_n}\right ]{\rm y}^{k+1}=-\beta{\rm q},  \qquad   {\rm V}({\rm y}^k)\in \partial_{B} {\rm P}_{\cal{K}}({\rm y}^k),  \qquad k=0,1,\ldots. 
\end{equation}
The next result shows how to take advantage of choosing an appropriate parameter  $\beta>0$,  in order to apply the semi-smooth Newton method for obtaining  a solution of 
\eqref{eq:lclcp}.
\begin{theorem}\label{theorem-6}
Let  $q \in \R^n$ and  ${\rm M} \in \R^{n\times n}$ be a symmetric positive definite matrix. Then,  for any starting point \({\rm y}^0 \in \R^{n}\),  the   semi-smooth Newton  sequence  
\(\{{\rm y}^{k}\}\)  generated by \eqref{eq:newtonc2*}   is well-defined.  Moreover,  if  $0< \beta<2/\|{\rm M}\|$  then   \eqref{eq:lcalcp*}  has a unique solution \({\rm y}^*\in \mathbb{R}^n\).   Additionally, if $ \|{\rm M}\|\|{\rm M}^{-1}\|< 3$ and 
\begin{equation}\label{AssumRate}
\frac{1}{2}\|{\rm M}^{-1}\|<\beta< \frac{3}{2}  \frac{1}{\|{\rm M}\|}, 
\end{equation}
then  the sequence \(\{{\rm y}^{k}\}\)    converges $Q$-linearly  to  \({\rm y}^*\in \mathbb{R}^n\),  the unique solution  of  \eqref{eq:lcalcp*}, as follows:
\begin{equation}\label{paraRate}
\|{\rm y}^*-{\rm y}^{k+1}\|\leq \frac{\left\| \emph{M}_\beta -{\rm Id_n}\right\|}{1- \left\| \emph{M}_\beta -{\rm Id_n}\right\|} \|{\rm y}^*-{\rm y}^{k}\|,  \qquad k=0, 1, \ldots .
\end{equation}
Furthermore,  \( {\rm P}_ {\cal{K}}({\rm y}^*)\)   is a solution of \eqref{th:defpos}.
\end{theorem}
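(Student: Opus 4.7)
The plan is to assemble the theorem from the parametric machinery already developed: everything has been arranged so that equation \eqref{eq:lcalcp*} has exactly the same structure as \eqref{eq:lcalcp} with $M$ replaced by $M_\beta = \beta M$ and $q$ replaced by $q_\beta = \beta q$. Thus, for each claim I will identify which of the previous results applies to the pair $(M_\beta, q_\beta)$ and then translate the hypothesis on $\beta$ into the hypothesis required by that earlier result.

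First I would establish well-definedness. Since $M$ is symmetric positive definite and $\beta > 0$, $M_\beta$ is also symmetric positive definite. Lemma~\ref{nonsingGC} applied to $M_\beta$ then guarantees that $[M_\beta - \mathrm{Id}_n]V(y) + \mathrm{Id}_n$ is nonsingular for every $y \in \R^n$, which by \eqref{eq:newtonc2*} makes $\{y^k\}$ well-defined for any starting point. For the existence and uniqueness of a solution of \eqref{eq:lcalcp*} under $0 < \beta < 2/\|M\|$, I would argue exactly as in Remark~\ref{r:wdef}: the positive definiteness of $M$ combined with this bound yields $\|M_\beta - \mathrm{Id}_n\| < 1$, and then Proposition~\ref{pr:uniqqM} applied to $(M_\beta, q_\beta)$ gives uniqueness.

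The heart of the argument is the Q-linear convergence under \eqref{AssumRate}, and the main (and really only nontrivial) step is to show that \eqref{AssumRate} is equivalent to $\|M_\beta - \mathrm{Id}_n\| < 1/2$. Since $M$ is symmetric positive definite, the spectral theorem gives eigenvalues $0 < \lambda_{\min} \leq \cdots \leq \lambda_{\max}$ of $M$ with $\|M\| = \lambda_{\max}$ and $\|M^{-1}\| = 1/\lambda_{\min}$; the eigenvalues of $M_\beta - \mathrm{Id}_n$ are $\beta \lambda_i - 1$, and since this matrix is symmetric,
\[
\|M_\beta - \mathrm{Id}_n\| = \max_i |\beta \lambda_i - 1|.
\]
The inequality $\|M_\beta - \mathrm{Id}_n\| < 1/2$ is then equivalent to $1/(2\lambda_{\min}) < \beta < 3/(2 \lambda_{\max})$, which is exactly \eqref{AssumRate}, and the nonemptiness of this interval is the hypothesis $\|M\| \|M^{-1}\| < 3$. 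In particular $\|M_\beta - \mathrm{Id}_n\| < 1/2 < 1$, so $M_\beta - \mathrm{Id}_n$ is nonsingular (by Banach's Lemma applied to a suitable rescaling, or just because no $\beta\lambda_i = 1$), allowing me to invoke Theorem~\ref{theorem-4} with $(M, q)$ replaced by $(M_\beta, q_\beta)$. This yields both the uniqueness already obtained and the Q-linear estimate \eqref{paraRate}.

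Finally, for the assertion that $P_{\cal K}(y^*)$ solves the LSOCCP, I would appeal to Remark~\ref{r:eqeq}, which extends Proposition~\ref{pr:polscq} to the parametric equation \eqref{eq:lcalcp*}: the key observation there is that $\cal K$ is a cone, so multiplying the defining relations of the complementarity problem by $\beta > 0$ preserves membership in $\cal K$ and preserves the complementarity identity. I do not anticipate any serious obstacle; the spectral computation of $\|M_\beta - \mathrm{Id}_n\|$ is the only step requiring more than a citation, and it is elementary.
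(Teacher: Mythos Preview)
Your proposal is correct, and for the well-definedness, uniqueness, and final LSOCCP claim you proceed exactly as the paper does. The convergence argument, however, differs in a useful way. The paper does \emph{not} reduce to Theorem~\ref{theorem-4}; instead it re-derives the error formula from scratch: it writes out $y^{k+1}-y^*$ explicitly, applies Lemma~\ref{l:error} and then Banach's Lemma to obtain \eqref{paraRate}, and finally asserts (without the spectral computation) that assumption \eqref{AssumRate} forces the contraction factor below $1$. Your route is more modular: you observe via the eigenvalue calculation that \eqref{AssumRate} is precisely the statement $\|M_\beta-\mathrm{Id}_n\|<1/2$, after which Theorem~\ref{theorem-4} applied to $(M_\beta,q_\beta)$ delivers \eqref{paraRate} immediately. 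Your approach avoids repeating the error analysis already packaged in Theorem~\ref{theorem-4} and, as a bonus, makes transparent why the interval in \eqref{AssumRate} and the condition-number bound $\|M\|\|M^{-1}\|<3$ take the form they do---a point the paper only unpacks later in the remark following the theorem. The paper's direct derivation, on the other hand, is self-contained and does not require checking the extra hypothesis of Theorem~\ref{theorem-4} that $M_\beta-\mathrm{Id}_n$ be nonsingular.
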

\begin{proof} 
By using the same idea as in  Lemma~\ref{nonsingGC},  we  can prove that   $ \left[ {\rm M_\beta} -{\rm Id_n}\right]{\rm V}({\rm y}^{k}) +{\rm Id_n}$  is a  nonsingular  matrix, for $k=0,1, \ldots$. Consequently, for any starting point \({\rm y}^0 \in \R^{n}\),  the   semi-smooth Newton  sequence   \(\{{\rm y}^{k}\}\)  generated by \eqref{eq:newtonc2*}   is well-defined. Now, assuming that  $0< \beta<2/\|{\rm M}\|$, we conclude from  Remark~\ref{r:wdef} that the  equation in  \eqref{eq:lcalcp*}  has a unique solution \({\rm y}^*\in \mathbb{R}^n\).  Then,  
 $$\left [ \left[ {\rm M_\beta} -{\rm Id_n}\right]{\rm V}({\rm y}^{*}) +{\rm Id_n}\right ]{\rm y}^*=-\beta {\rm q},$$ which, together  with
 definition of   $\{{\rm y}^{k}\}$ in \eqref{eq:newtonc2*} and  \( {\rm V}({\rm x}){\rm x}={\rm P}_{\cal{K}}({\rm x}) \) for all \({\rm x}\in
 \R^n\),  yield
 \begin{align*}
 {\rm y}^{k+1}- {\rm y}^*&= \left [ \left[ {\rm M_\beta} -{\rm Id_n}\right]{\rm V}({\rm y}^{k}) +{\rm Id_n}\right ]^{-1} \left[ \left [ \left[ {\rm M_\beta} -{\rm Id_n}\right]{\rm V}({\rm y}^{*}) +{\rm Id_n}\right ]{\rm y}^*-\left [ \left[ {\rm M_\beta} -{\rm Id_n}\right]{\rm V}({\rm y}^{k}) +{\rm Id_n}\right ]{\rm y}^*\right]\\&=\left [ \left[ {\rm M_\beta} -{\rm Id_n}\right]{\rm V}({\rm y}^{k}) +{\rm Id_n}\right ]^{-1} \left[ {\rm M_\beta} -{\rm Id_n}\right]\left[
 {\rm P}_{\cal{K}}\left( {\rm y}^*\right) -  {\rm P}_{\cal{K}}( {\rm y}^k)-{\rm V}( {\rm y}^k) \left( {\rm y}^*-  {\rm y}^{k}\right)\right], 
 \end{align*}
 for $k=0, 1, \ldots $. By combining   Lemma~\ref{l:error}  with this last equality  and by using the properties of the norm, we have
 \begin{equation}\label{dos-term}
 \|{\rm y}^*-{\rm y}^{k+1}\|\leq \left \|\left [ \left[ {\rm M_\beta} -{\rm Id_n}\right]{\rm V}({\rm y}^{k}) +{\rm Id_n}\right ]^{-1}\right\|  \left\| {\rm M_\beta} -{\rm Id_n}\right\| \| {\rm y}^*-  {\rm y}^{k}\|.
 \end{equation} 
 On the other hand,  for  $0< \beta<2/\|{\rm M}\|$, we have
 $
 \left\|\left[ {\rm M_\beta} -{\rm Id_n}\right]{\rm V}({\rm y}^{k})\right\|\le \left\| {\rm M_\beta} -{\rm Id_n}\right\|<1.
 $
 Thus,  by using Lemma \ref{lem:ban}, we have
 $$
 \left \|\left [ \left[ {\rm M_\beta} -{\rm Id_n}\right]{\rm V}({\rm y}^{k}) +{\rm Id_n}\right ]^{-1}\right\|\le \frac{1}{1-\left\| {\rm M_\beta} -{\rm Id_n}\right\|}, 
 $$
which combined with  \eqref{dos-term}  gives us \eqref{paraRate}. Moreover, by using assumption \eqref{AssumRate}, we conclude 
 $$
 \frac{\left\| {\rm M_\beta} -{\rm Id_n}\right\|}{1-\left\| {\rm M_\beta} -{\rm Id_n}\right\|}<1.
 $$
 Then, the last inequality, together with \eqref{paraRate}, imply that   $\{{\rm y}^k\}$ converges Q-linearly to ${\rm y}^*$  and, by using Remark~\ref{r:eqeq}, we obtain  that  \( {\rm P}_ {\cal{K}}({\rm y}^*)\)  is a solution of  \eqref{th:defpos}.
\end{proof}
We end this section by presenting an upper bound for the rate of convergence of the semi-smooth Newton method in \eqref{eq:newtonc2*}, which
depends only of the minimum and maximum eigenvalues of ${\rm M}$.
\begin{remark} Let us focus our attention on the convergence rate of $\{{\rm y}^k\}$,  the sequence generated by the semi-smooth Newton method in
	\eqref{eq:newtonc2*},  when ${\rm M}$ is symmetric and positive definite. The inequality in  \eqref{paraRate}  shows  that $\|{\rm
	M_\beta-Id_n}\|$ determines the rate, which depends on $\beta$. Indeed, the upper bound for the rate of convergence is
$$
r(\beta):=\frac{\left\| { \beta \rm M} -{\rm Id_n}\right\|}{1-\left\| {\beta \rm M} -{\rm Id_n}\right\|}, \qquad  \frac{1}{2}\|{\rm M}^{-1}\|<\beta< \frac{3}{2}  \frac{1}{\|{\rm M}\|}.
$$ 
Now, we are going to compute the  minimum  value of the function $r$ in the range of $\beta$ given above. Since the function  $t \mapsto t/(1-t)$
is increasing, the  minimum  value of  $r$ in this range  is reached when
\begin{equation}\label{normB*}
\beta_*=\ds{\rm argmin}\left\{ \left\| { \beta \rm M} -{\rm Id_n}\right\| ~: ~\frac{1}{2}\|{\rm M}^{-1}\|<\beta< \frac{3}{2}  \frac{1}{\|{\rm M}\|} \right\}.
\end{equation}
Let $\lambda_{\rm min}$ and $\lambda_{\rm max}$ be the minimum and the maximum eigenvalues of ${\rm M}$, respectively. Thus,  since ${\rm M}$ is
symmetric and positive definite $\|{\rm M}^{-1}\|=1/\lambda_{\rm min}$ and $\lambda_{\rm max}=\|{\rm M}\|$, and moreover, by using \eqref{normB*}, we
obtain
$$
 \beta_*=\ds{\rm argmin} \left\{  \max\left\{|\beta \lambda_{\rm min}-1|, |\beta \lambda_{\rm max}-1|\right\}~: ~\frac{1}{2}\frac{1}{ \lambda_{\rm min}}<\beta< \frac{3}{2}  \frac{1}{\lambda_{\rm max}} \right\}.
 $$
 Some calculations show that 
$$
\beta_*=\frac{2}{\lambda_{\rm max}+\lambda_{\rm min}}, \qquad  \quad 
r(\beta_*)=\frac{\lambda_{\rm max}-\lambda_{\rm min}}{2\lambda_{\rm min}}.
$$
Additionally, if  $\lambda=\lambda_{\rm max}=\lambda_{\rm min}$, then $\beta_*=1/\lambda$ and  $r(\beta_*)=0$. Thus, the inequality in
\eqref{paraRate} implies that ${\rm y}^1={\rm y}^*$. Hence the sequence $\{{\rm y}^k\}$ generated by \eqref{eq:newtonc2*} with $\beta=\beta_*$
converges to ${\rm y}^*$ in just one iteration.
\end{remark}
\section{Computational Results} \label{sec:ctest} 
We implemented the semi-smooth Newton method \eqref{eq:newtonc2pw} for solving equation \eqref{eq:pwls} in Matlab 7.11.0.584 (R2010b). 
When the projection mapping   \({\rm P}_{\cal{K}}\) onto the second order cone  \({\cal{K}}\)  is not continuously differentiable at ${\rm x}\in \mathbb{R}^n$, we define  
${\rm V}({\rm x})\in \partial_{B} {\rm P}_{\cal{K}}({\rm x})$ in the simplest way. This means that ${\rm V}({\rm x})$ is equal to $\rm Id_n$ in case of item (a) and the null matrix in cases of items (b) and 
(c) of Lemma~\ref{l:dpj}.
The method stops at the iterate ${\rm x}^k\in \R^n$ reporting ``Solution found'' if $\left\|{\rm P}_{\cal{K}}({ {\rm x}^k}) +{\T} { {\rm x}^k}-b\right\| \leq 10^{-6}$.
Failure is considered when the number of iterations exceeds $20$.
All codes are freely available at \url{https://orizon.mat.ufg.br/p/3374-links}.
The experiments were run on a 3.4 GHz Intel(R) i7 with 4 processors, 8Gb of RAM, and Linux operating system.

In order to verify the applicability of our approach, we tested the semi-smooth Newton method \eqref{eq:newtonc2pw} in several random problems \eqref{eq:pwls}. 
A linear system must be solved in each iteration of the method.
For this purpose, we used the {\it mldivide} (same as {\it backslash}) command of Matlab. 
Following, we enlighten how the problems data were generated.

\noindent {\bf (i)} {\it Matrix ${\rm T} $}: We consider cases where the matrix ${\rm T}$ is dense and cases where ${\rm T}$ is sparse for different dimension values $n$.
In the first case, we randomly generated the fully dense matrix ${\rm T}$ from a uniform distribution on $(-10,10)$. 
To ensure the fulfillment of the hypothesis \eqref{eq:CC2pw}, we computed the minimum singular value of ${\rm T}$, then we rescaled ${\rm T}$ by multiplying it by $2$ divided by the minimum singular value multiplied 
by a random number in the interval $(0,1)$. 
To construct a sparse matrix ${\rm T}$ we used the Matlab routine {\it sprand}, which generates a sparse matrix with predefined dimension, density and singular values. 
First, we randomly generated the vector of singular values from a uniform distribution on $(0,1)$. 
The fulfillment of the hypothesis \eqref{eq:CC2pw} can be easily achieved by conveniently rescaling the singular values.
Finally, we evoke {\it sprand} with density equal to $0.004$. This means that, only about $0.4\%$ of the elements of ${\rm T}$ are non null.

\noindent {\bf (ii)} {\it Solution and vector $b$}: By Proposition~\ref{pr:uniqqpw}, equation \eqref{eq:pwls} has a unique solution if $\|{\rm T}^{-1}\|<1$. 
Note that if ${\rm x^*}=(x_1^*, {{\rm x_2^*}})  \in \mathbb{R}\times {\mathbb{R}}^{n-1}$ with  $x_1^* \leq -\|{\rm x_2^*}\|$ is a solution 
of \eqref{eq:pwls}, then ${\rm x^*}$ is a solution of ${\rm T}{\rm x} = b$. On the other hand, if $x_1^* \geq \|{\rm x_2^*}\|$, then ${\rm x^*}$
is a solution of $[\rm Id_n+{\rm T}]{\rm x} = b$. 
In both cases, the solution can be found by simply solving a linear system. 
In particular, if the generated sequence $\{\rm x^k\}$ converges to ${\rm x^*}=(x_1^*, {{\rm x_2^*}})  \in \mathbb{R}\times {\mathbb{R}}^{n-1}$ with
$x_1^* < -\|{\rm x_2^*}\|$ or $x_1^* > \|{\rm x_2^*}\|$ then the convergence is finite.
We ignore these trivial cases by assuming that the unique solution ${\rm x^*}$ of \eqref{eq:pwls} is such that $-\|{\rm x_2^*}\| < x_1^* <  \|{\rm x_2^*}\|$.
First, we randomly generated ${\rm x_2^*} \in {\mathbb{R}}^{n-1}$ from a uniform distribution on $(-10,10)$ and then we defined $x_1^*\in
\mathbb{R}$ as a convex combination between 
$-\|{\rm x_2^*}\|$ and $\|{\rm x_2^*}\|$.
After that, we computed  $b={\rm P}_{\cal{K}}({ {\rm x^*}}) +{\T} { {\rm x^*}}$.

\noindent {\bf (iii)} {\it Initial point}: As preliminary numerical tests, we investigated the influence of the starting point in the performance of the method. 
At this stage, we generated $100$ problems with fully dense $1000 \times 1000$ matrix ${\rm T}$ and $100$ problems with sparse $5000 \times 5000$ matrix ${\rm T}$. 
For each problem, we ran the semi-smooth Newton method starting from different initial points ${\rm x^0}=(x_1^0, {{\rm x_2^0}})  \in \mathbb{R}\times {\mathbb{R}}^{n-1}$ such that $x_1^0 > \|{\rm x_2^0}\|$, 
$x_1^0 < -\|{\rm x_2^0}\|$ and $-\|{\rm x_2^0}\| < x_1^0 <  \|{\rm x_2^0}\|$.
Observe that these regions are the interior of the cone $\cal{K}$, the interior of the polar cone ${\cal{K}}^{\circ}$ and the interior of the
complement of $\cal{K}\cup {\cal{K}}^{\circ}$, respectively.
For simplicity, let us call these regions by {\it Region 1, 2} and {\it 3}, respectively.
For dense instances, the method presented similar performance (in the sense of number of problems solved and average CPU time required) regardless
of the location of the initial point. 
For sparse instances, the average CPU time to solve the problems was 8.61s, 8.14s, and 9.22s for the initial point in Region 1, 2, and 3, respectively.
Let us explain this slight difference.
When ${\rm T}$ is dense, the matrix of the linear system $[{\rm T}+{\rm V}(\rm x^0)]{\rm x}=b$ solved at the first iteration is also dense regardless
of the location of the initial point. 
On the other hand, for sparse ${\rm T}$, the matrix of the linear system $[{\rm T}+{\rm V}(\rm x^0)]{\rm x}=b$ is sparse if ${\rm x^0}$ belongs to Region 1 or 2, and dense if ${\rm x^0}$ belongs to Region 3, see Lemma~\ref{l:dpj}.
Since the method requires very few iterations to find the solution, the computational cost of the first iterations justify the difference between the average CPU times.
At this stage, we conclude that it is advantageous to take the starting point into Region 1 or 2. 
If ${\rm x^0}$ belongs to Region 2, then ${\rm V}({\rm x^0})=0$ and  the first iterate ${\rm x^1}$ is the unique solution of the linear system ${\rm T}{\rm x}=b$.
For simplicity, we assume directly that the starting point is given by the unique solution of ${\rm T}{\rm x}=b$.

For dense instances, we consider dimensions $n = 500$, $1000$, $2000$, and $3000$, and for sparse instances $n = 3000$, and $5000$.
We generate $200$ different problems for each test set.
In general, when ${\rm T}$ is a dense matrix, its condition number is of order $10^3$ or $10^4$.
For comparative purposes, the singular values of a sparse matrix ${\rm T}$ were rescaled so that its condition number is of order $10^4$.
Table \ref{tabletests} gives a summary of our numerical experiments. 
The column ``$n$'' is the dimension of the test set,  ``Cond(${\rm T}$)'' is the average condition number of matrices ${\rm T}$, ``Problems solved'' informs the number of successfully solved problems, 
``It'' and ``time'' are the average number of semi-smooth Newton iterations, and the average CPU time for the solved problems, respectively. 

\begin{table}[h]
{\footnotesize
\begin{center}
\begin{tabular}{|c|c|c|c|c|c|} \cline{2-6}
\multicolumn{1}{ c|  }{}      & $n$    & Cond(${\rm T}$)           & Problems solved  & It   & time (s) \\ \hline 
\multirow{4}{*}{Dense ${\rm T}$}    & 500    &  $1.27 \times 10^4$ & 198 (99.0\%)     & 1.97 & 0.03  \\ \cline{2-6} 
			      & 1000   &  $1.40 \times 10^4$ & 187 (93.5\%)     & 1.97 & 0.14 \\ \cline{2-6}
			      & 2000   &  $4.83 \times 10^4$ & 140 (70.0\%)     & 2.25 & 0.79 \\ \cline{2-6}
			      & 3000   &  $4.13 \times 10^4$ & 106 (53.0\%)     & 2.23 & 2.00 \\ \hline  \hline

\multirow{2}{*}{Sparse ${\rm T}$}   & 3000   &  $1.92 \times 10^4$ & 194 (97.0\%)     & 1.96 & 1.75  \\ \cline{2-6}
			      & 5000   &  $1.88 \times 10^4$ & 194 (97.0\%)     & 1.94 & 6.07  \\ \hline  
\end{tabular}
\caption{Performance of semi-smooth Newton method in sets of 200 random problems considering fully dense matrices $T$, and  sparse matrices $T$ (density approximately $0.4\%$).}
\label{tabletests}
\end{center}}
\end{table}

The semi-smooth Newton method solves a typical problem with two iterations. In fact, considering the 1019 solved problems for all instances, 
970 (95.2$\%$) problems were solved with 2 iterations, while 36 (3.5$\%$) problems were solved with 1 iteration and 13 (1.3$\%$) problems were solved with more than 2 iterations.
It is interesting to point out that, for any considered problem, {\it all} iterates ${\rm x}^k$ belongs to Region 3 (since the solution also belong to this set, this fact is not a big surprise). 
Therefore, the matrix ${\rm T}+{\rm V}(\rm x^k)$ is dense and the command {\it mldivide} of Matlab uses a LU solver for the associated linear system $[{\rm T}+{\rm V}(\rm x^k)]{\rm x}=b$.

The robustness of the semi-smooth Newton method is directly connected to the ability of the linear system solver used.
It is rarely true in practical implementations that direct methods for linear systems give the exact solution.
In some cases, they are not able to find the solution with high accuracy and the convergence of the main method gets impaired.
By means of numerical observations, we realize that in instances where the command {\it mldivide} is able to give the solution of a linear system with residuum less than $10^{-6}$, the semi-smooth Newton method 
stops reporting ``Solution found''. Otherwise, the semi-smooth Newton method is not able to find the solution with the desired accuracy.
Studies concerning the convergence theory of an inexact Newton method are necessary to clarify these issues.
As we can see in Table~\ref{tabletests}, in case of dense matrices ${\rm T}$, the number of problems solved by the semi-smooth Newton method decreases according to the increase of the dimension $n$. 
This phenomenon is clearly connected with the fact that the greater the dimension, the more operations are required to solve a linear equation. 
Consequently, the method is most affected by the accumulation of floating-point errors resulting in lower robustness. 
However, it is useful to mention that, in all cases of failure the achieved accuracy was close to the desired one (typically, order of $10^{-6}$). 
For sparse ${\rm T}$ instances, the command {\it mldivide} is able to solve linear systems with high accuracy and the robustness of the semi-smooth Newton method is not affected.

We close the computational results by testing the method in problems where ${\rm T}$ is a symmetric and positive definite matrix.
In this case, by Theorem~\ref{th:defpos}, equation~\eqref{eq:pwls} has a unique solution for any ${\rm b}\in\R^n$ and the semi-smooth Newton method \eqref{eq:newtonc2pw} is well-defined.
Moreover, convergence is guaranteed if $\|{\rm T}^{-1}\|<1$.
We randomly generated 200 problems with $1000 \times 1000$ matrices ${\rm T}$ that do {\it not} fulfill this hypothesis. Let us clarify this. 
First, we randomly generated a fully dense matrix ${\rm A}$  and a vector $\lambda\in\R^n$ of eigenvalues of ${\rm T}$ from a uniform distribution on $(0,1)$.
After that, we extracted the eigenvectors of matrix $({\rm A}+{\rm A}^\top)/2$ in the columns of a matrix ${\rm U}$ and defined ${\rm T}={\rm
U}{\rm D}{\rm U}^\top$, where ${\rm D}$ is the diagonal matrix 
with $(i, i)$-th entry equal to $\lambda_i$, $i = 1,\ldots, 1000$. Finally, we randomly generated the vector ${\rm b}$ as in the previous experiments. 
The average value of $\|{\rm T}^{-1}\|$ was $4.90 \times 10^3$.
The semi-smooth Newton method successfully solved {\it all} problems of this test set.
The average number of iterations and the average CPU time for solving the problems were 5.90 and 0.39 seconds, respectively.
We observed that, as in the previous tests, if we rescaled the eigenvalues of ${\rm T}$ such that hypothesis \eqref{eq:CC2pw} was fulfilled, the semi-smooth Newton method 
required (in general) two iterations for finding the solution with the desired accuracy.
When ${\rm T}$ is a symmetric and positive definite matrix, this experiment suggests the conjecture that the semi-smooth Newton method always converges. 

\section{Final remarks} \label{sec:conclusions}
In this paper we studied a special  equation   associated to the second order cone. Our main  result shows that, under mild conditions, we can
apply  a semi-smooth Newton method for finding a  solution of  this equation and  the generated sequence converges globally and  linearly. The numerical experiments  suggest that the convergence rate is better than linear and even it achieves accurate
solutions of large scale problems in few iterations. The theoretical verification  of these properties remains as an open question.  Our numerical tests also  suggest  that the semi-smooth Newton method always converges if the matrix   of the equation is
symmetric and positive definite. The studied equation is important because it is related to linear second order cone complementarity problems, used
in a wide range of applications \cite{LoboVandenbergheBoyd98}. It would be interesting to see whether the used technique can be applied for solving  
a similar equation associated to  the cone of the semidefinite matrices. A more general open question is whether our semi-smooth Newton method approach can be
unified to solve linear symmetric cone complementarity problems. The importance of this question is due to its connections with physics,
mechanics, economics, game theory, robotics, optimization and neural networks, such as the ones described in
\cite{ZhangZhangPan2013,YonekuraKanno2012,AghassiBertsimas2006,NishimuraHayashiFukushima2012,LuoAnXia2009,NishimuraHayashFukushima2009,AndreaniFriedlanderMelloSantos2008,KoChenYang2011,ChenTseng2005}. We remark that any quadratic second order symmetric cone optimization problem can be reformulated in terms of linear complementarity problems 
related to symmetric cones, which is a further motivation to study this question.
We foresee further progress in this topic in the near future.   



\end{document}